\documentclass{lmcs} 

\keywords{monitorability, Wadge reducibility}


\usepackage{amsmath,amssymb,amsthm}

\usepackage{tikz}
\usetikzlibrary{positioning}
\usepackage{graphicx}
\usepackage{caption}
\usepackage[T1]{fontenc}

\usepackage{xcolor} 
\usepackage{url}
\usepackage[hidelinks]{hyperref}
\usepackage[capitalize]{cleveref}

\newcommand{\N}{\mathbb N}

\newcommand{\Q}{\mathbb Q}

\newcommand{\Bai}{\N^{\N}}
\newcommand{\Seq}{\N^{<\omega}}

\newcommand{\Can}{2^{\N}}

\newcommand{\bsigma}{\boldsymbol\Sigma}
\newcommand{\bpi}{\boldsymbol\Pi}
\newcommand{\bdelta}{\boldsymbol\Delta}

\newcommand{\lnh}{{\rm{length}}}
\newcommand{\card}{{\rm{card}}}

\newcommand{\quot}[2]{{\raisebox{.2em}{$#1\!$}\left/\raisebox{-.2em}{$#2$}\right.}}

\newcommand{\Cl}[1]{\mathsf{Cl}(#1)}
\newcommand{\Int}[1]{\mathsf{Int}(#1)}
\newcommand{\Fr}[1]{\mathsf{Fr}(#1)}

\usepackage{hyperref}

\theoremstyle{plain} 

\def\eg{{\em e.g.}}
\def\ie{{\em i.e.}}


\begin{document}

\title{The complexity of being monitorable} 

\author[R.~Camerlo]{Riccardo Camerlo}[a]
\author[F.~Dagnino]{Francesco Dagnino}[b] 

\address{DIMA - Universit\`a di Genova}	
\email{riccardo.camerlo@unige.it}  

\address{DIBRIS - Universit\`a di Genova}	
\email{francesco.dagnino@unige.it}  




\begin{abstract}
\noindent We study monitorable sets from a topological standpoint.
In particular, we use descriptive set theory to describe the complexity of the family of monitorable sets in a countable space $X$.
When $X$ is second countable, we observe that the family of monitorable sets is $\bpi^0_3$ and determine the exact complexities it can have.
In contrast, we show that if $X$ is not second countable then the family of monitorable sets can be much more complex, giving an example where it is $ \bpi^1_1$-complete.
\end{abstract}

\maketitle


\section{Introduction}

Runtime Verification (RV)~\cite{LeuckerS09,BartocciFFR18} 
is a formal verification technique that, differently from others such as Model Checking,
focuses on the analysis of actual system executions, rather than on a model (that is, an abstraction) of it.
More precisely, 
given a formal specification or property of the expected system behaviours, 
one synthesises a \emph{monitor}, that is, a piece of software that observes system executions and tries to establish whether they satisfy or violate the specification. 
This makes RV a lightweight and scalable approach to formal verification at the price of losing exhaustiveness: 
the verdict a monitor can reach is valid only for the executions it has observed, not for all possible ones. 

Since in many cases monitors have to run together with the system they observe, 
one of the key challenges in RV is to ensure that the monitor runs only if it has the possibility of eventually concluding a verdict, thus avoiding a waste of resources. 
Unfortunately, one soon realises that there are properties whose associated monitor could end up in situations where, no matter what it observes, it would never be able to determine the satisfaction or the violation of the property. 
This has led to the notion of \emph{monitorability}~\cite{PnueliZ06}.
Roughly, a property is said to be \emph{monitorable} 
if we can synthesise a monitor which has always the possibility of eventually verifying the satisfaction or the violation of the property, that is, 
the above situation cannot happen. (Actually, there are various flavours of monitorability, depending on the operational guarantees one wants to obtain on monitors; we refer to \cite{AcetoAFIL21ssm} for a discussion of the whole spectrum.) 

Monitorability is typically studied at the syntactic level separately for each specification language, see \eg, \cite{BauerLS11,FrancalanzaAI17,AmaraBFF25}. 
This has led to a scattered family of results with several concrete applications that share many similarities but lack a unifying picture.
In order to achieve a general and syntax independent treatment of monitorability,
Diekert and Leucker~\cite{dieker2014} propose to take a more abstract semantic perspective, studying monitorability with topological tools. 
The key insight is that possible system behaviours can be viewed as points in a topological space where open subsets abstractly model the \emph{observations} a monitor can perform. 
Within this framework, a property is simply a subset of the space—\ie, a set of valid behaviours.
Therefore, given a topological space $X$, 
monitorable properties determine a subset $\mathcal{M}(X) \subseteq \mathcal{P}(X)$, where $\mathcal{P}(X)$ denotes the powerset of $X$, \ie, the collection of all properties on $X$. 

The purpose of this paper is to determine how difficult it is establishing whether a property is monitorable or not. 
To this end, we study the descriptive complexity of $\mathcal M (X)$ as a subset of $\mathcal{P}(X)$ when $X$ is a countable space. 
Indeed, in this case the powerset $\mathcal{P}(X)$ can be endowed with a topology homeomorphic to the Cantor space, so 
one can use the tools and techniques of descriptive set theory \cite{Kechris1995} to determine the topological complexity of $\mathcal{M}(X)$. 
We do this in the same vein as \cite{Todorc2014}, where Todor\v cevi\'c and Uzc\'ategui studied the complexity of topologies as subsets of $ \mathcal P (X)$.

\subsubsection*{Outline} 
Section~\ref{sec:basics-mon} introduces monitorable subsets of a topological space together with some of their basic properties. 

The core of the paper is section \ref{sec:cntbl}, which studies monitorable sets in countable topological spaces.
Section \ref{sec:basics-top} recalls the notions in descriptive set theory that are used in this section. 
In Section \ref{sec:sccnt} we determine  the possible topological complexities, measured using Wadge reducibility, for the family of monitorable subsets in a countable, second countable space. 
The result is represented in figure \ref{fig:res}.
Thus, the family of monitorable subsets of a countable, second countable space has always moderate complexity, being $ \bpi^0_3$: the results of section \ref{sec:sccnt} give in particular a criterion to recognise whether such a family is of the highest possible complexity, namely $ \bpi^0_3$-complete.
We apply this characterisation in section \ref{sec:trn} to topologies induced by a transition relation of an automaton: we show that most (in the sense of Baire category) transition relations on a countable set of states give rise to a topology whose family of monitorable sets is topologically simple, that is, $ \bsigma^0_2$.

The situation for non-second countable topologies is quite different: in fact, in section \ref{sec:nsc} we give an example of a countable, non-second countable space whose family of monitorable sets is complete coanalytic.

\section{Monitorable sets: definition and first properties}
\label{sec:basics-mon}

Diekert and Leucker~\cite{dieker2014} have introduced a topological formalisation of the notion of monitorability.
In this section we recall their definition and provide some basic properties of monitorable sets. 

The terminology and notation we employ are largely standard and self-explanatory.
We denote $A\triangle B$ the symmetric difference of the sets $A,B$.
For $X$ a topological space and $A\subseteq X$, we write 
$\Cl{A}$ for the closure of $A$, 
$\Int{A}$ for the interior of $A$, and 
$\Fr{A}$ for the frontier of $A$, that is, $\Cl{A}\setminus\Int{A}$.
A basis for the topology always consists of non-empty sets.
A point $x\in X$ is \emph{isolated} if $\{ x\} $ is open.
If $U$ is an open subset of $X$, we say that $A$ is \emph{dense} in $U$ if $U\subseteq \Cl{A} $; similarly, we say that $A$ is \emph{codense} in $U$ if $U\subseteq \Cl{X\setminus A} $.
We say that $A$ is \emph{nowhere dense} if $\Int{\Cl{A}} = \emptyset$, \ie, the only open subset in which $A$ is dense is the empty one.
An open set $U$ is \emph{regular} if $U= \Int{ \Cl{U} } $.

\begin{defi}[\cite{dieker2014}]
Let $X$ be a topological space.
A subset $A\subseteq X$ is \emph{monitorable} if for every non-empty, open set $U\subseteq X$ either $A$ is not dense in $U$ or $A$ is not codense in $U$.
\end{defi}

In other words, a subset $A$ of a topological space $X$ is monitorable if and only if, for every non-empty open subset $U\subseteq X$, there is a non-empty open subset $V\subseteq X$ such that 
either $V\subseteq U\cap A$ or $V\subseteq U \setminus A$. 
Intuitively, this means that every observation $U$ can be refined to another observation $V\subseteq U$  
which is either a subset of $A$ or of its complement, \ie, $V$ determines either the satisfaction or the violation of $A$. 

We denote by $ \mathcal M (X)$ the family of monitorable subsets of $X$.
This is a subset of $ \mathcal P (X)$, the powerset of $X$, endowed with the product topology---after the identification of each subset of $X$ with its characteristic function.
If different topologies are considered on the same set $X$, the more precise notation $ \mathcal M ( \mathcal T )$ is used to denote the family of monitorable subsets of $X$ in the topology $ \mathcal T $.

The following proposition reports facts proved in \cite{dieker2014}.

\begin{prop} \label{prop:beg}
Let $X$ be a topological space.
Then:
\begin{enumerate}
\item\label{prop:beg:1} Open subsets of $X$ are monitorable
\item\label{prop:beg:2} If $A,B$ are monitorable subsets of $X$ then $A\cup B$ is monitorable
\item\label{prop:beg:3} If $A$ is a monitorable subset of $X$ then $X\setminus A$ is monitorable
\item\label{prop:beg:4} A subset $A$ of $X$ is monitorable if and only if $ \Int{ \Fr{A} }=\emptyset $
\end{enumerate}
\end{prop}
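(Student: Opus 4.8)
The plan is to establish statement~\ref{prop:beg:4} first, since it provides a clean characterisation from which the other items follow quickly. The key observation is that the frontier can be rewritten as $\Fr{A}=\Cl{A}\cap\Cl{X\setminus A}$, using the identity $X\setminus\Int{A}=\Cl{X\setminus A}$, and that, for a non-empty open set $U$, saying that $A$ is simultaneously dense and codense in $U$ amounts exactly to $U\subseteq\Cl{A}$ and $U\subseteq\Cl{X\setminus A}$, \ie\ $U\subseteq\Fr{A}$. Hence $A$ fails to be monitorable precisely when some non-empty open $U$ is contained in $\Fr{A}$, and the existence of such a $U$ is equivalent to $\Int{\Fr{A}}\neq\emptyset$. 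This yields item~\ref{prop:beg:4} at once.

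From item~\ref{prop:beg:4} the items~\ref{prop:beg:1} and~\ref{prop:beg:3} are immediate. For item~\ref{prop:beg:3} I would use that the frontier is complement-invariant, $\Fr{X\setminus A}=\Fr{A}$, so that $A$ and $X\setminus A$ are monitorable together. For item~\ref{prop:beg:1}, if $A$ is open then $\Fr{A}=\Cl{A}\setminus A$; a non-empty open $V\subseteq\Fr{A}$ would lie in $\Cl{A}$ while being disjoint from $A$, contradicting the fact that a non-empty open set contained in $\Cl{A}$ must meet $A$. Thus $\Int{\Fr{A}}=\emptyset$.

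The only item needing slightly more care is item~\ref{prop:beg:2}. I would argue directly from the refinement reformulation given after the definition: starting from a non-empty open $U$, monitorability of $A$ yields a non-empty open $V\subseteq U$ with $V\subseteq U\cap A$ or $V\subseteq U\setminus A$. In the first case $V$ already lies in $A\cup B$; in the second, applying monitorability of $B$ to $V$ produces a non-empty open $W\subseteq V$ contained either in $B$, hence in $A\cup B$, or in $V\setminus B$, which is disjoint from both $A$ and $B$ and so sits inside $U\setminus(A\cup B)$. Alternatively, via item~\ref{prop:beg:4}, one verifies the inclusion $\Fr{A\cup B}\subseteq\Fr{A}\cup\Fr{B}$ and invokes the fact that a finite union of closed sets with empty interior again has empty interior. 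I expect this case analysis (or, equivalently, the nowhere-dense union argument) to be the main, though modest, obstacle.

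To conclude that $\mathcal M(X)$ is a Boolean algebra, I would note that $\emptyset$ and $X$ are open, hence monitorable by item~\ref{prop:beg:1}, that closure under complement and finite union are precisely items~\ref{prop:beg:3} and~\ref{prop:beg:2}, and that closure under intersection follows by De Morgan's law, since $A\cap B=X\setminus((X\setminus A)\cup(X\setminus B))$.
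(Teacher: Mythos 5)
Your proof is correct. Note that the paper itself does not prove this proposition at all: it explicitly reports these as facts proved in \cite{dieker2014}, so your argument is a self-contained proof rather than an alternative to one given in the text. Checking your steps: the identity $\Fr{A}=\Cl{A}\cap\Cl{X\setminus A}$ does convert ``$A$ is dense and codense in $U$'' into ``$U\subseteq\Fr{A}$'', which gives item (4) at once; items (1) and (3) follow exactly as you say, item (3) because $\Fr{A}$ is symmetric in $A$ and $X\setminus A$, item (1) because a non-empty open set contained in $\Cl{A}$ must meet $A$. For item (2), both of your routes are valid. The two-step refinement argument works because in the remaining case the open set $W\subseteq V\setminus B$ also satisfies $W\subseteq V\subseteq U\setminus A$, hence $W\subseteq U\setminus (A\cup B)$. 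The alternative via item (4) is also sound: the inclusion $\Fr{A\cup B}\subseteq\Fr{A}\cup\Fr{B}$ holds since $x\in\Cl{A}$ together with $x\notin\Int{A\cup B}\supseteq\Int{A}$ forces $x\in\Fr{A}$ (and symmetrically for $B$), and a union of two closed sets with empty interior has empty interior (if $U\subseteq C_1\cup C_2$ is open and non-empty, then $U\setminus C_1$ is an open subset of $C_2$, so it is empty, so $U\subseteq C_1$, a contradiction). The closing observation that $\mathcal M(X)$ is a Boolean algebra, using De Morgan for intersections, is standard and complete.
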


\begin{rem}
\begin{enumerate}
\item From proposition \ref{prop:beg} it follows in particular that $ \mathcal M (X)$ is a Boolean algebra and that every nowhere dense set is monitorable.
\item In general, monitorable sets are not closed under countable union: in $ \Q $ every singleton is monitorable, but there are non-monitorable subsets (see for instance lemma \ref{lem:conditionpi}).
However, in \cite[theorem 1]{dieker2014} it is proved that if $X$ is completely metrisable and $A\subseteq X$ is both an at most countable union of monitorable sets and an at most countable intersection of monitorable sets, then $A$ is monitorable.
\end{enumerate}
\end{rem}

\begin{exa} \label{ex:finstr} 
Let $E$ be an at most countable set with at least two elements. 
Consider the set $E^{{<}\omega}$ of finite strings over $E$ endowed with the Alexandroff topology induced by the prefix ordering $\subseteq $, that is, $u\subseteq v$ iff there is $w\in E^{{<}\omega}$ such that $uw = v$. 
Hence, open subsets are unions of subsets of the form 
$u^{\uparrow }= \{ v \in E^{{<}\omega}\mid u \subseteq v\}$ 
for $u \in E^{{<}\omega}$.
For $A\subseteq E^{{<}\omega}$ we have that the closure of $A$ is given by 
$\Cl{A} = \{ v\in E^{{<}\omega} \mid v\preceq u \mbox{ for some } u \in A \}$.
Fix two distinct elements $a,b\in E$ and consider the following subsets:
\begin{itemize}
\item $L_1 = \{ u \in E^{{<}\omega} \mid u = v_1 a v_2 \mbox{ for some } v_1,v_2\in E^{{<}\omega} \}$, that is, the set of all strings containing at least one occurrence of $a$
\item $L_2 = \{ u \in E^{{<}\omega} \mid u = b^n \mbox{ for some } n \in \N \}$, that is, the set of strings containing only occurrences of $b$ 
\item $L_3 = \{ u \in E^{{<}\omega} \mid u = a^n b^n \mbox{ for some } n \in \N \}$, that is,  the set of strings consisting of a string of $a$ followed by a string of $b$ of the same length
\item $L_4 = \{ u \in E^{{<}\omega} \mid u = vb \mbox{ for some } v \in E^{{<}\omega} \}$, that is, the set of strings ending with an occurrence of $b$
\end{itemize}
By definition, a subset $A\subseteq E^{{<}\omega}$ is monitorable if and only if 
for all $u \in E^{{<}\omega}$ there is $v\in E^{{<}\omega}$ such that 
either $(uv)^{\uparrow }\subseteq A$ or $(uv)^{\uparrow }\subseteq E^{{<}\omega}\setminus A$. 
Therefore, we have that $L_1$, $L_2$ and $L_3$ are monitorable, while $L_4$ is not. 
Indeed, $L_1$ is open, $L_2$ is closed, $L_3$ is nowhere dense and $L_4$ is both dense and codense in $E^{{<}\omega}$. 

This space abstractly models a typical setting for Runtime Verification: 
the set $E$  contains the events that a system can generate, 
executions of such a system are represented as sequences of events, i.e., elements of $E^{{<}\omega}$, and 
properties of the system are collections of valid executions, i.e., subsets of $E^{{<}\omega}$. 
The topology we have considered models the fact that 
monitors can only see finite prefixes of actual executions and, based on those, they should determine the satisfaction or violation of  the properties they are verifying. 
\end{exa}

Denote $ \mathcal N (X)$ the ideal of nowhere dense subsets of $X$ and $ \mathcal R \mathcal O (X)$ the Boolean algebra of regular open subsets of $X$. 
We now show how the sets $\mathcal{M}(X)$, $\mathcal{N}(X)$ and $\mathcal{R} \mathcal O (X)$ are related. 
First, we have the following corollary to proposition \ref{prop:beg}. 

\begin{cor} \label{02072025}
The Boolean algebra $ \mathcal M (X)$ is the Boolean algebra generated by the open sets and the nowhere dense sets.
In fact, if $M\in \mathcal M (X)$, then there exist an open set $U$ and a set $N\in \mathcal N (X)$ such that $M=U\cup N$.
\end{cor}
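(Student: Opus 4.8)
The plan is to establish the stronger ``in fact'' statement first, since the characterisation of $\mathcal M(X)$ as a Boolean algebra follows from it together with the facts already recorded. The key observation is that any set $M$ admits the canonical decomposition $M = \Int{M} \cup (M \setminus \Int{M})$, whose first piece is open; so it suffices to show that when $M$ is monitorable the remainder $N := M \setminus \Int{M}$ is nowhere dense.

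To see this, I would argue that $N$ is contained in the frontier of $M$. Indeed, $N = M \setminus \Int{M} \subseteq \Cl{M} \setminus \Int{M} = \Fr{M}$. Since $\Fr{M} = \Cl{M} \cap (X \setminus \Int{M})$ is the intersection of two closed sets, it is closed, whence $\Cl{N} \subseteq \Fr{M}$ and therefore $\Int{\Cl{N}} \subseteq \Int{\Fr{M}}$. By proposition~\ref{prop:beg}(\ref{prop:beg:4}), monitorability of $M$ gives $\Int{\Fr{M}} = \emptyset$, so $\Int{\Cl{N}} = \emptyset$, that is, $N$ is nowhere dense. This yields $M = U \cup N$ with $U = \Int{M}$ open and $N \in \mathcal N(X)$.

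For the Boolean algebra statement, I would note that one inclusion is immediate: open sets are monitorable by proposition~\ref{prop:beg}(\ref{prop:beg:1}), nowhere dense sets are monitorable by the remark preceding the corollary, and $\mathcal M(X)$ is a Boolean algebra, again by proposition~\ref{prop:beg}; hence $\mathcal M(X)$ contains the Boolean algebra generated by the open and the nowhere dense sets. Conversely, the decomposition just obtained shows that every element of $\mathcal M(X)$ is the union of an open set and a nowhere dense set, and so lies in that generated Boolean algebra. The two inclusions give equality.

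I expect no serious obstacle here: the only point requiring any care is the verification that $N$ is nowhere dense, which hinges on recognising that $\Fr{M}$ is closed, so that passing to the closure of $N$ keeps us inside $\Fr{M}$ and lets proposition~\ref{prop:beg}(\ref{prop:beg:4}) be applied directly.
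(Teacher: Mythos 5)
Your proof is correct and follows essentially the same route as the paper: your $N = M \setminus \Int{M}$ is exactly the paper's $\Fr{M} \cap M$ (since $M \subseteq \Cl{M}$), and both arguments reduce to the observation that this remainder sits inside the closed set $\Fr{M}$, whose interior is empty by proposition~\ref{prop:beg}(\ref{prop:beg:4}). The only difference is that you spell out the nowhere-density verification and the two inclusions for the generated Boolean algebra, which the paper leaves implicit.
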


\begin{proof}
Notice that $M= \Int{M} \cup ( \Fr{M} \cap M)$ and $ \Fr{M} \cap M$ is nowhere dense by proposition \ref{prop:beg}(4).
\end{proof}

%

\begin{rem}
\begin{enumerate}
\item If $ \mathcal B $ is a basis for $X$, proposition \ref{prop:beg}(4) says that, given $A\subseteq X$,
\begin{equation} \label{eq:charact}
A\in \mathcal M (X)\Leftrightarrow\neg\exists U\in \mathcal B \ (U\subseteq \Cl{A} \land U\subseteq \Cl{X\setminus A} )
\end{equation}
\item Proposition \ref{prop:beg}(4) suggests that the regular open set $ \Int{ \Fr{A} }$ (or any notion of size on it) provides a way to assign a quantitative measurement of the non-monitorability of a set $A$.
Indeed, $ \Int{\Fr{A} } $ is the set of points at which $A$ is not monitorable according to the terminology of \cite[definition 2(1) and proposition 3]{dieker2014}.
\item Proposition \ref{prop:beg}(4) also shows that in most spaces there exist monitorable sets that are arbitrarily complex.
In fact, if $N$ is nowhere dense in $X$, then every subset of $N$ is in $ \mathcal M (X)$.
\end{enumerate}
\end{rem}

Define an equivalence relation $E$ on $ \mathcal P (X)$ by letting
\[
AEB\Leftrightarrow A\triangle B\in \mathcal N (X)
\]
We denote $[A]_E$ the $E$-equivalence class of the set $A$.
Notice that $ \mathcal M (X)$ is $E$-invariant.

\begin{prop} \label{05072023}
Let $A\subseteq X$.
The following are equivalent:
\begin{enumerate}
\item $A$ is monitorable
\item $[A]_E$ contains some open set
\item $[A]_E$ contains exactly one regular open set
\end{enumerate}
The regular open set in (3) is $ \Int{ \Cl{A} } $ and is the biggest open set belonging to $[A]_E$.
\end{prop}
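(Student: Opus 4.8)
The plan is to establish the cycle $(1)\Rightarrow(3)\Rightarrow(2)\Rightarrow(1)$, obtaining the two supplementary claims along the way. I will freely use three elementary facts: for any $A$, the set $\Int{\Cl{A}}$ is regular open; the frontier of a closed set is closed with empty interior, hence nowhere dense; and an open nowhere dense set is empty. The two easy edges come first. The implication $(3)\Rightarrow(2)$ is immediate, since a regular open set is in particular open. For $(2)\Rightarrow(1)$, recall that open sets are monitorable by proposition~\ref{prop:beg}(\ref{prop:beg:1}) and that $\mathcal M(X)$ is $E$-invariant; so if $[A]_E$ contains an open set $U$, then from $U\in\mathcal M(X)$ and $U\mathrel{E}A$ we get $A\in\mathcal M(X)$.

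The substance is in $(1)\Rightarrow(3)$. Assuming $A$ monitorable, I set $W=\Int{\Cl{A}}$, which is regular open, and claim that $W\in[A]_E$, that is, $A\triangle W\in\mathcal N(X)$. Since $A\setminus W\subseteq\Cl{A}\setminus\Int{\Cl{A}}=\Fr{\Cl{A}}$ and the frontier of the closed set $\Cl{A}$ is nowhere dense, $A\setminus W$ is nowhere dense. For $W\setminus A$, suppose toward a contradiction that it is not nowhere dense, so there is a nonempty open $V\subseteq\Cl{W\setminus A}$; replacing $V$ by $V\cap W$, which is still nonempty since the nonempty open $V$ meets $\Cl{W\setminus A}$ and hence meets $W\setminus A\subseteq W$, I may assume $V\subseteq W$. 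Then $V\subseteq W\subseteq\Cl{A}$ shows that $A$ is dense in $V$, while $V\subseteq\Cl{W\setminus A}\subseteq\Cl{X\setminus A}$ shows that $A$ is codense in $V$, contradicting monitorability of $A$ at $V$. Hence $W\setminus A$ is nowhere dense, so $A\mathrel{E}W$, and the regular open member of the class is exhibited as $\Int{\Cl{A}}$ (this also yields $(1)\Rightarrow(2)$).

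For uniqueness and the maximality claim I isolate one lemma: if $U$ is open, $V$ is regular open, and $U\triangle V\in\mathcal N(X)$, then $U\subseteq V$. Indeed $U\setminus\Cl{V}$ is open and contained in $U\setminus V\subseteq U\triangle V$, hence open and nowhere dense, hence empty; so $U\subseteq\Cl{V}$ and, $U$ being open, $U\subseteq\Int{\Cl{V}}=V$. Applying the lemma to any open $U\in[A]_E$ (which satisfies $U\mathrel{E}W$ by transitivity) with $V=W$ gives $U\subseteq W$, so $W$ is the largest open set in its class; applying it to two regular open members of $[A]_E$ in both orders gives their equality, which is exactly the uniqueness demanded by (3).

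I expect the main obstacle to be the proof that $W\setminus A$ is nowhere dense: this is the only point at which the definition of monitorability is genuinely needed, rather than pure closure bookkeeping, and it relies on first shrinking the witnessing open set into $W$ so that the density and codensity of $A$ can be read off simultaneously.
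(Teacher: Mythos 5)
Your proposal is correct and matches the paper's own argument in all essentials: both exhibit $\Int{\Cl{A}}$ as the regular open representative by showing $A\triangle\Int{\Cl{A}}\in\mathcal N(X)$ (your density/codensity contradiction for $W\setminus A$ is just an inlined form of the paper's appeal to proposition~\ref{prop:beg}(\ref{prop:beg:4})), and both derive uniqueness and maximality from the same key computation, namely that an open $U$ with $U\setminus V$ nowhere dense and $V$ regular open satisfies $U\subseteq\Cl{V}$, hence $U\subseteq\Int{\Cl{V}}=V$. The only deviations are organisational: you prove the cycle $(1)\Rightarrow(3)\Rightarrow(2)\Rightarrow(1)$ instead of the paper's $(1)\Leftrightarrow(2)$ plus $(3)\Rightarrow(2)$, and you state the uniqueness lemma for arbitrary open $U$ rather than regular open, which lets the maximality claim fall out directly rather than via the paper's detour through $O\mathrel{E}\Int{\Cl{O}}\supseteq O$.
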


\begin{proof}
The equivalence $(1)\Leftrightarrow (2)$ is due to corollary \ref{02072025}, while the implication $(3)\Rightarrow (2)$ is clear.

Now assume (1) for the rest of the proof.

We begin by showing that $ \Int{ \Cl{A} } EA$, that is
\[
\Int{ \Cl{A} } \setminus A,A\setminus \Int{ \Cl{A} } \in \mathcal N (X)
\]

To establish that $\Int{ \Cl{A} } \setminus A\in \mathcal N (X)$, it is enough to observe that $ \Int{ \Cl{A} } \setminus A\subseteq \Cl{A} \setminus \Int{A} = \Fr{A} $, for any set $A$, and then use proposition \ref{prop:beg}(4).
Similarly $A\setminus \Int{ \Cl{A} } \subseteq A\setminus \Int{A} \subseteq \Fr{A} $, implying $A\setminus \Int{ \Cl{A} } \in \mathcal N (X)$.

Next step is to show that in any $E$-class there is at most one regular open set.
So let $U,V$ be regular open and such that $UEV$, that is
\[
U\setminus V,V\setminus U\in \mathcal N (X)
\]
From $U\setminus V\in \mathcal N (X)$ it follows that $U\subseteq \Cl{V} $, otherwise the non-empty, open $U\setminus \Cl{V}\subseteq U\setminus V$ would lead to a contradiction, so $U\subseteq \Int{ \Cl{V}} =V$.
Similarly, $V\subseteq U$ whence $U=V$.

In particular, we have that $ \Int{ \Cl{A} } $ is the same for all monitorable sets $A$ in a given $E$-class.
As $OE \Int{ \Cl{O} } \supseteq O$ for every open set $O$, the last assertion is also obtained.
\end{proof}

\begin{cor}
\begin{enumerate}
\item For every $M\in \mathcal M (X)$ there exists a unique $(O,N)\in \mathcal R \mathcal O (X)\times \mathcal N (X)$ such that $M=O\triangle N$
\item The quotient Boolean algebra $ \quot{ \mathcal M (X)}{ \mathcal N (X)} $ is isomorphic to the regular open algebra $ \mathcal R \mathcal O (X)$
\end{enumerate}
\end{cor}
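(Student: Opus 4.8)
The plan is to prove both parts by exploiting the structure established in Proposition~\ref{05072023}, which already isolates $\Int{\Cl{A}}$ as the canonical regular open representative of each monitorable $E$-class.

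For part (1), given $M\in\mathcal M(X)$, I would set $O=\Int{\Cl{M}}$, which is regular open, and let $N=M\triangle O$. By Proposition~\ref{05072023} we know $OEM$, which means precisely that $M\triangle O=N\in\mathcal N(X)$, so $M=O\triangle N$ is a decomposition of the required form. For uniqueness, suppose $M=O\triangle N=O'\triangle N'$ with $O,O'$ regular open and $N,N'\in\mathcal N(X)$. Then $O\triangle O'=N\triangle N'$; since $\mathcal N(X)$ is an ideal it is closed under symmetric difference, so $O\triangle O'\in\mathcal N(X)$, i.e.\ $OEO'$. The uniqueness of the regular open set in each $E$-class, already proved inside Proposition~\ref{05072023}, forces $O=O'$, and then $N=M\triangle O=M\triangle O'=N'$.

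For part (2), the natural candidate for the isomorphism is the map sending $[M]_{\mathcal N(X)}$ (the class of $M$ in the quotient by the ideal $\mathcal N(X)$) to $\Int{\Cl{M}}\in\mathcal R\mathcal O(X)$; equivalently, using part (1), it sends $[M]$ to the unique regular open $O$ with $M=O\triangle N$. I would verify this is well defined (two monitorable sets differing by a nowhere dense set are $E$-equivalent, hence share the same $\Int{\Cl{\cdot}}$ by Proposition~\ref{05072023}), that it is surjective (every regular open set is itself monitorable by Proposition~\ref{prop:beg}(\ref{prop:beg:1}) and maps to itself), and injective (if $\Int{\Cl{M}}=\Int{\Cl{M'}}$ then $M E M'$, so $M\triangle M'\in\mathcal N(X)$ and the classes coincide). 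The remaining task is to check it is a Boolean algebra homomorphism, which amounts to confirming that $\Int{\Cl{\cdot}}$ respects the operations modulo $\mathcal N(X)$: complementation and finite unions on the left correspond to the regular open operations (join being $\Int{\Cl{U\cup V}}$ and complement $\Int{\Cl{X\setminus U}}=\Int{X\setminus U}$) on the right, after quotienting out nowhere dense differences.

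I expect the main obstacle to lie in the homomorphism verification for part (2): the Boolean operations in $\mathcal R\mathcal O(X)$ are not the set-theoretic ones but their regularisations, so one must check carefully that, for instance, $M\cup M'$ and $\Int{\Cl{M}}\cup\Int{\Cl{M'}}$ differ by a nowhere dense set, and that this union in turn is $E$-equivalent to the regular open join $\Int{\Cl{\Int{\Cl{M}}\cup\Int{\Cl{M'}}}}$. Each of these comparisons reduces to showing a frontier-type difference is nowhere dense, which follows from Proposition~\ref{prop:beg}(\ref{prop:beg:4}) together with the fact that $\mathcal N(X)$ is an ideal, but bookkeeping the three operations cleanly is the delicate part. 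Everything else is a direct application of the equivalences already secured in Proposition~\ref{05072023}.
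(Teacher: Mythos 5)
Your proposal is correct and follows essentially the same route as the paper: part (1) via the existence and uniqueness of the regular open representative $\Int{\Cl{M}}$ in each $E$-class from Proposition~\ref{05072023}(3), and part (2) via the map $[A]_E\mapsto\Int{\Cl{A}}$. In fact you supply more detail than the paper, whose proof of (2) simply asserts that this map is a Boolean algebra isomorphism; your outlined verification of the homomorphism property (reducing each operation to an $E$-equivalence handled by Proposition~\ref{05072023}) is sound.
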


\begin{proof}
(1) By propositon \ref{05072023}(3) let $O$ be the unique regular open set such that $M=O\triangle N$ for some nowhere dense set $N$.
Then $N=M\triangle O$ is unique as well.

(2) The function $[A]_E\mapsto \Int{ \Cl{A} } $ is an isomorphism $ \quot{ \mathcal M (X)}{ \mathcal N (X)} \to \mathcal R \mathcal O (X)$ of Boolean algebras.
\end{proof}

\section{Monitorable sets in countable spaces}
 \label{sec:cntbl}

The purpose of this section is to discuss the descriptive set theoretic complexity of $ \mathcal M (X)$ as a subset of $ \mathcal P (X)$ when $X$ is a countable space. 
We first recall basic notions  in descriptive set theory (Section~\ref{sec:basics-top}). 
Then we describe the possible complexities of the family of monitorable sets in second countable spaces (Section~\ref{sec:sccnt}), and apply these results to topologies induced by transition relations (Section~\ref{sec:trn}). 
Finally, we show by an example that for non-second countable spaces the situation is far more complicated (Section~\ref{sec:nsc}). 

\subsection{Preliminaries on descriptive set theory}
\label{sec:basics-top}

We recall basic notions in descriptive set theory that we will use in the rest of this section, referring to \cite{Kechris1995} for a discussion in full detail.

Given a topological space $Z$, denote $ \bsigma^0_1(Z)$ the open family of $Z$, and $ \bpi^0_1(Z)$ the closed family of $Z$.
By recursion, for $2\leqslant\alpha <\omega_1$ let $ \bsigma^0_{\alpha }(Z)$ be the family of at most countable unions of members of $\bigcup_{\beta <\alpha } \bpi^0_{\beta }(Z)$, and set $ \bpi^0_{\alpha }(Z)=\{ Z\setminus A\mid A\in \bsigma^0_{\alpha }(Z)\} $.
For $1\leqslant\alpha <\omega_1$ let also $ \bdelta^0_{\alpha }(Z)= \bsigma^0_{\alpha }(Z)\cap \bpi^0_{\alpha }(Z)$.
The $\sigma $-algebra $B(Z)=\bigcup_{1\leqslant\alpha <\omega_1} \bsigma^0_{\alpha }(Z)=\bigcup_{1\le\alpha <\omega_1} \bpi^0_{\alpha }(Z)$ is the Borel family of $Z$.
Note that the classes $ \bpi^0_{\alpha }, \bsigma^0_{\alpha }, \bdelta^0_{\alpha }$ are \emph{boldface pointclasses}, that is, they are closed under continuous preimages.

A \emph{Polish} space is a separable, completely metrisable topological space.
If $Z$ is Polish, then $ \bsigma^0_{\alpha }(Z)\cup \bpi^0_{\alpha }(Z)\subseteq \bdelta^0_{\beta }(Z)$ whenever $\alpha <\beta $.
A subset $A$ of $Z$ is \emph{analytic}, or $ \bsigma^1_1$, if there exist a Polish space $Y$ and a continuous function $f:Y\to Z$ whose range is $A$; it is \emph{coanalytic}, or $ \bpi^1_1$, if its complement is analytic.

We will consider Polish spaces of the form $ \mathcal P (X)$ where $X$ is a countable topological space: indeed, identifying $ \mathcal P (X)$ with $2^X$ (via the identification of every subset with its characteristic function) and endowing the latter with the product topology where $2$ is discrete, $ \mathcal P (X)$ is homeomorphic to the Cantor space.

If $ \boldsymbol \Gamma $ is a boldface pointclass and $Z$ a Polish space, a set $A\subseteq Z$ is $\boldsymbol \Gamma $-\emph{hard} if for every zero-dimensional Polish space $Y$ and every $B\in \boldsymbol \Gamma (Y)$, the set $B$ is \emph{Wadge reducible} to the set $A$, that is, there exists a continuous function $f:Y\to Z$ such that $B=f^{-1}(A)$.
The set $A\subseteq Z$ is $ \boldsymbol \Gamma $-complete if $A\in \boldsymbol \Gamma (Z)$ and $A$ is $ \boldsymbol \Gamma $-hard.
A way to prove that a set is $ \boldsymbol \Gamma $-hard is to show that some $ \boldsymbol \Gamma $-hard set Wadge reduces to it.

If $Z$ is a Polish space, then every set in $ \bsigma^0_{\alpha }(Z)\setminus \bpi^0_{\alpha }(Z)$ is $ \bsigma^0_{\alpha }$-complete, and every set in $ \bpi^0_{\alpha }(Z)\setminus \bsigma^0_{\alpha }(Z)$ is $ \bpi^0_{\alpha }$-complete.
Every set in $ \bdelta^0_1(Z)\setminus\{\emptyset ,Z\} $ is $ \bdelta^0_1$-complete.
On the other hand, if $\alpha\geqslant 2$ then no set in $ \bdelta^0_{\alpha }(Z)$ is $ \bdelta^0_{\alpha }$-complete; we say that $A\subseteq Z$ is $ \bdelta^0_{\alpha }$-\emph{sharp} if $A\in \bdelta^0_{\alpha }(Z)\setminus\bigcup_{\beta <\alpha }( \bsigma^0_{\beta }(Z)\cup \bpi^0_{\beta }(Z))$.

\subsection{The second countable case} \label{sec:sccnt}
In this section we determine the sharp complexities of the sets $ \mathcal M (X)$, for $X$ ranging over countable, second countable topological spaces.

\begin{prop} \label{countseccount}
Let $X$ be a countable, second countable topological space.
Then $ \mathcal M (X)$ is a $ \bpi^0_3$ subset of $ \mathcal P (X)$.
\end{prop}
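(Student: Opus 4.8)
The plan is to read off the complexity directly from the basis characterisation \eqref{eq:charact}, using second countability to keep all the relevant quantifiers countable. Fix a countable basis $\mathcal B$ of nonempty open sets for $X$ (we may discard $\emptyset$); recall from Section~\ref{sec:basics-top} that, since $X$ is countable, $\mathcal P(X)$ is homeomorphic to the Cantor space. By \eqref{eq:charact},
\[
A\in\mathcal M(X)\Leftrightarrow\forall U\in\mathcal B\ \neg\bigl(U\subseteq\Cl{A}\land U\subseteq\Cl{X\setminus A}\bigr),
\]
so, since $\mathcal B$ is countable, it suffices to bound the complexity of the single density condition $U\subseteq\Cl{A}$, viewed as a condition on the variable $A$, and then assemble the pieces.

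The key step is to show that, for fixed $U\in\mathcal B$, the set $\{A\in\mathcal P(X):U\subseteq\Cl{A}\}$ is $\bpi^0_2$. First I would record the elementary reformulation
\[
U\subseteq\Cl{A}\Leftrightarrow\forall V\in\mathcal B\ \bigl(V\cap U\neq\emptyset\Rightarrow V\cap A\neq\emptyset\bigr),
\]
which holds because $\mathcal B$ is a basis: density of $A$ in $U$ is equivalent to $A$ meeting every basic open set that meets $U$. Now for each fixed $V$ the condition $V\cap A\neq\emptyset$ defines the set $\bigcup_{x\in V}\{A:x\in A\}$, a countable union of basic clopen subsets of $\mathcal P(X)$, hence an open set; and the antecedent $V\cap U\neq\emptyset$ does not involve $A$, so each implication is an open condition on $A$. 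Intersecting over the countably many $V\in\mathcal B$ exhibits $\{A:U\subseteq\Cl{A}\}$ as a countable intersection of open sets, that is a $\bpi^0_2$ set. The same computation applied to $X\setminus A$ in place of $A$ shows that $\{A:U\subseteq\Cl{X\setminus A}\}$ is $\bpi^0_2$ as well.

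It remains to assemble the bound. For each fixed $U\in\mathcal B$ the conjunction $U\subseteq\Cl{A}\land U\subseteq\Cl{X\setminus A}$ defines an intersection of two $\bpi^0_2$ sets, hence a $\bpi^0_2$ set, and so its negation defines a $\bsigma^0_2$ set. By the displayed characterisation, $\mathcal M(X)$ is the intersection of these $\bsigma^0_2$ sets over the countably many $U\in\mathcal B$, and a countable intersection of $\bsigma^0_2$ sets is $\bpi^0_3$ by definition of the hierarchy. This yields the claim.

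The arithmetic of the pointclasses is routine; the only place where care is genuinely needed is the reduction of the topological density statements to countable Boolean combinations of the basic conditions $V\cap A\neq\emptyset$, and this is precisely where second countability is indispensable. Without a countable basis the quantifiers over open sets (both the outer one over $U$ and the one hidden in the definition of $\Cl{A}$) would range over an uncountable family, and the $\bpi^0_3$ bound would fail --- which is exactly the behaviour exploited in Section~\ref{sec:nsc}.
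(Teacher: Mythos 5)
Your proof is correct and follows essentially the same route as the paper's: both start from the basis characterisation \eqref{eq:charact}, show that each density condition $U\subseteq\Cl{A}$ (and its counterpart for $X\setminus A$) is $\bpi^0_2$ by quantifying over the countable basis with the inner condition $V\cap A\neq\emptyset$ open, and then assemble the countable Boolean combination to get the $\bpi^0_3$ bound. The only cosmetic difference is that you quantify over basic sets \emph{meeting} $U$ rather than \emph{contained in} $U$, which is an equally valid reformulation of density.
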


\begin{proof}
Fix an at most countable basis $ \mathcal B $ for $X$.
The subformula $U\subseteq \Cl{A} $ in \eqref{eq:charact} means $\forall V\in \mathcal B \ (V\subseteq U\Rightarrow V\cap A\ne\emptyset )$, that is, $\forall V\in \mathcal B \ (V\subseteq U\Rightarrow\exists x\in X\ (x\in V\land x\in A))$; similarly, the subformula $U\subseteq \Cl{X\setminus A} $ means $\forall V\in \mathcal B \ (V\subseteq U\Rightarrow\exists x\in X\ (x\in V\land x\notin A))$.
Both subformulas define a $ \bpi^0_2$ set.
Therefore \eqref{eq:charact} yields that $ \mathcal M (X)$ is $ \bpi^0_3$, due to the prefix $\neg\exists U\in \mathcal B$.
\end{proof}

The following characterises the second countable topologies, no matter the cardinality, in which all sets are monitorable.

\begin{prop} \label{06072025}
Let $X$ be a second countable space.
Then $ \mathcal M (X)= \mathcal P (X)$ if and only if the set of isolated points is dense in $X$.
\end{prop}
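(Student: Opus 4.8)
The plan is to work entirely through the characterisation of proposition \ref{prop:beg}(\ref{prop:beg:4}), namely that $A$ is monitorable if and only if $\Int{\Fr{A}}=\emptyset$. Thus $\mathcal M (X)=\mathcal P (X)$ holds precisely when $\Int{\Fr{A}}=\emptyset$ for \emph{every} $A\subseteq X$. Writing $I$ for the set of isolated points of $X$, I would prove the two implications separately: the easy one is that density of $I$ forces every set to be monitorable, and the substantive one is the construction of a non-monitorable set when $I$ fails to be dense.

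For the implication from right to left, suppose $I$ is dense and, towards a contradiction, that some $A$ is not monitorable, so that there is a non-empty open set $U\subseteq\Fr{A}$. By density of $I$ there is an isolated point $x\in U$. Then $\{x\}$ is open, and since $x\in\Cl{A}$ the open neighbourhood $\{x\}$ of $x$ must meet $A$, forcing $x\in A$; hence $\{x\}\subseteq\Int{A}$, contradicting $x\in\Fr{A}=\Cl{A}\setminus\Int{A}$. This shows $\Int{\Fr{A}}=\emptyset$ for all $A$, that is, $\mathcal M (X)=\mathcal P (X)$.

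For the converse I would argue contrapositively: assuming $I$ is not dense, I construct a single non-monitorable set. The set $W=X\setminus\Cl{I}$ is then non-empty and open, and every non-empty open $V\subseteq W$ is \emph{crowded} (has no isolated points), since an isolated point of such a $V$ would be isolated in $X$ and would lie outside $\Cl{I}$, a contradiction; in particular every such $V$ has at least two points. It suffices to produce $A\subseteq X$ with $A$ dense and codense in $W$: then every point of $W$ lies in $\Cl{A}\cap\Cl{X\setminus A}$ while no non-empty open subset of $W$ is contained in $A$, so $W\subseteq\Fr{A}$ and $\Int{\Fr{A}}\supseteq W\neq\emptyset$. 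To build such an $A$ I would split into two cases. If some non-empty open $V\subseteq W$ is finite, I choose an inclusion-minimal non-empty open $V_0\subseteq V$; minimality forces every open set meeting $V_0$ to contain $V_0$, so the subspace $V_0$ is indiscrete, and picking any $a\in V_0$ the singleton $A=\{a\}$ is both dense and codense in $V_0$ (with $V_0$ now playing the role of $W$ above). Otherwise every non-empty open subset of $W$ is infinite; fixing a countable base $\{V_n\}_{n}$ of non-empty open sets for $W$ (available by second countability), I recursively choose, at stage $n$, two points $a_n,b_n\in V_n$ distinct from one another and from all points chosen at earlier stages---possible because $V_n$ is infinite while only finitely many points have been used. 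Setting $A=\{a_n\mid n\in\N\}$ makes $A$ meet every $V_n$, while each $b_n\in W\setminus A$ witnesses that $X\setminus A$ meets every $V_n$, so $A$ is dense and codense in $W$.

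The main obstacle is the converse direction, and within it the need to handle basic open sets that may be finite: it is exactly the interplay between crowdedness and closure under finite intersections that rules out the pathological ``odd-cycle'' obstructions to a suitable two-colouring and makes the recursive choice of fresh witnesses legitimate. Isolating the finite case through a minimal open set, which is forced to be an indiscrete subspace, is the cleanest way I see to dispose of it without invoking general resolvability theorems.
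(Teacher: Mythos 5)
Your proof is correct and follows essentially the same route as the paper's: the easy direction via an isolated point killing dense-plus-codense sets, and for the converse a case split between a (finite, hence) minimal open set inside the open set missing $\Cl{I}$ --- where an indiscrete subspace with at least two points yields a non-monitorable singleton --- and the all-infinite case, handled by recursively picking pairs of fresh points along a countable base to build a dense and codense set. The only differences (splitting on finiteness rather than on existence of a minimal open set, and working in $X\setminus\Cl{I}$ explicitly) are cosmetic.
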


\begin{proof}
Assume first that the set of isolated points is dense and let $U$ be a non-empty open subset of $X$.
Since $U$ contains an isolated point, no set can be dense and codense in $U$, and every subset of $X$ is monitorable.

Conversely, assume that the set of isolated points is not dense in $X$, and let $V$ be a non-empty, open subset that contains no isolated points.
If $V$ contains a non-empty, open set $U$ that is minimal with respect to inclusion, this must have at least $2$ elements; then any set $A$ such that $\emptyset\ne A\cap U\ne U$ is dense and codense in $U$, therefore non-monitorable.
Otherwise every non-empty, open set included in $V$ must be infinite.
Let $\{ U_n\mid n\in \N \} $ be an open basis for the induced topology of $V$.
Define by recursion distinct elements $x_n,y_n\in U_n\setminus\bigcup_{m<n}\{ x_m,y_m\} $: the set $\{ x_n\mid n\in \N \} $ is dense and codense in $V$, so it is not monitorable.
\end{proof}

Proposition \ref{06072025} tells when $ \mathcal M(X)$ has the lowest possible complexity, namely $ \mathcal M (X)= \mathcal P (X)$.
The other complexities realised by $ \mathcal M (X)$ for $X$ countable and second countable are described in the following example.

\begin{exa} \label{ex:exmpls}
\begin{enumerate}
\item $ \mathcal M (X)$ can be a complete clopen set.

Let $X$ be the topological sum of a countable discrete space $Y$ and a $2$-element indiscrete space $Z=\{ z_1,z_2\} $, so that $ \mathcal M (X)\ne \mathcal P (X)$ by proposition \ref{06072025}.
Given $A\subseteq X$, we have that $A$ is not monitorable if and only if $\emptyset\ne A\cap Z\ne Z$, that is, if and only if $(z_1\in A\lor z_2\in A)\land (z_1\notin A\lor z_2\notin A)$, which is a clopen condition on $A$.
\item $ \mathcal M (X)$ can be a complete closed set.

In fact, $ \mathcal M (X)=\{\emptyset , X\} $ if and only if $X$ is an indiscrete topological space.
\item $ \mathcal M (X)$ can be a $ \bsigma^0_2$-complete set.

Let $X$ be a countable set endowed with the cofinite topology.
A subset of $X$ is dense and codense in some non-empty, open set if and only if it is dense and codense if and only if it is infinite and coinfinite.
This is a $ \bpi^0_2$ condition, so $ \mathcal M (X)$ is $ \bsigma^0_2$.
To see completeness, just observe that $ \mathcal M (X)$ is dense and codense in $ \mathcal P (X)$, so $ \mathcal M (X)$ cannot be $ \bpi^0_2$, by the Baire category theorem.
\item $ \mathcal M (X)$ can be a $ \bpi^0_3$-complete set.

Let $X= \N^2 $ be partially ordered by letting
\[
(m,n)\preccurlyeq (m',n')\Leftrightarrow m=m'\land n\leqslant n'
\]
and endow $X$ with the Alexandrov topology induced by this partial order.
Then $ \mathcal M (X)$ is a $ \bpi^0_3$-complete subset of $ \mathcal P (X)$.

Indeed, let $S_3=\{\alpha\in 2^{ \N^2}\mid\exists m\in \N \ \exists^{\infty }n\in \N \ \alpha (m,n)=0\} $, which is $ \bsigma^0_3$-complete (see \cite[\S 23.A]{Kechris1995}).
Define $f:2^{ \N^2}\to \mathcal P (X)$ by letting
\[
f(\alpha )=\{ (m,2n)\mid\alpha (m,n)=0\}
\]
The function $f$ is continuous.
Moreover, $f(\alpha )$ is codense in $X$ for every $\alpha $, as $\{ (m,2n+1)\mid m,n\in \N \}\subseteq X\setminus f(\alpha )$.
Finally:
\begin{itemize}
\item If $\alpha\in S_3$, let $m\in \N $ be such that $\exists^{\infty }n\in \N \ \alpha (m,n)=0$.
Then $f(\alpha )$ is dense in the open set $\{ (m,n)\mid n\in \N \} $, so that $f(\alpha )\notin \mathcal M (X)$.
\item If $\alpha\notin S_3$, consider any non-empty, open $U\subseteq X$. There exist $m,n_0\in \N $ such that $\{ (m,n)\mid n\geqslant n_0\}\subseteq U$; let also $n_1\geqslant \frac{n_0}2 $ be such that $\forall n\geqslant n_1\ \alpha (m,n)=1$.
Then $\{ (m,n)\mid n\geqslant 2n_1\} $ is an open subset of $U$, and $f(\alpha )\cap\{ (m,n)\mid n\geqslant 2n_1\} =\emptyset $, witnessing that $f(\alpha )$ is not dense in $U$.
This shows that $f(\alpha )\in \mathcal M (X)$.
\end{itemize}
Therefore $2^{ \N^2}\setminus S_3=f^{-1}( \mathcal M (X))$, establishing the claim.
\end{enumerate}
\end{exa}

In the rest of this section we show that the only possible complexities for $ \mathcal M (X)$ are those given by proposition \ref{06072025} and example \ref{ex:exmpls}.
In particular, we derive a procedure to recognise when $ \mathcal M (X)$ is $ \bpi^0_3$-complete.

\begin{thm} \label{thm:wishful}
Let $X$ be a countable, second countable space.
If $ \mathcal M(X)\notin \bpi^0_1( \mathcal P (X))$, then $ \mathcal M (X)$ is $ \bsigma^0_2$-hard.
\end{thm}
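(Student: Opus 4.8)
The plan is to prove the dichotomy in the form: either $\mathcal M(X)$ is closed, or a $\bsigma^0_2$-complete set Wadge reduces to it. Throughout I work with the complement written as a union of \emph{local} non-monitorability sets: for a non-empty basic open $U$ set $D_U=\{A\in\mathcal P(X):A\text{ is dense and codense in }U\}$. By \eqref{eq:charact}, $\mathcal P(X)\setminus\mathcal M(X)=\bigcup_U D_U$ over the countably many non-empty basic opens $U$, and each $D_U$ is $\bpi^0_2$, since ``$A$ dense in $U$'' and ``$X\setminus A$ dense in $U$'' are countable intersections of the open conditions $A\cap V\ne\emptyset$, with $V$ ranging over the basic opens contained in $U$.

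First I isolate the structural reason for non-closedness. Call a non-empty open set an \emph{atom} if it is minimal for inclusion among non-empty opens; if $A$ \emph{splits} an atom $M$, i.e.\ $\emptyset\ne A\cap M\ne M$, then, as $M$ is the only non-empty open subset of itself, $A$ is dense and codense in $M$. I claim that if every non-empty open set contains an atom, then $\mathcal M(X)$ is closed. Indeed, a set that is dense and codense in some $U$ is dense and codense in any atom $M\subseteq U$, so in this case $A$ is non-monitorable if and only if it splits some atom; hence $\mathcal P(X)\setminus\mathcal M(X)=\bigcup_M\{A:\emptyset\ne A\cap M\ne M\}$ is a union of open sets, thus open. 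Since we assume $\mathcal M(X)\notin\bpi^0_1(\mathcal P(X))$, there must exist a non-empty open $U^\ast$ containing no atom; minimality failing everywhere below $U^\ast$ lets us build, by recursion, an infinite strictly descending chain of non-empty opens $U^\ast=W_0\supsetneq W_1\supsetneq W_2\supsetneq\cdots$.

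I then convert this configuration into a reduction of the (canonical $\bsigma^0_2$-complete) set of eventually-zero sequences. After refining the chain so that $\bigcap_k W_k$ has empty interior --- discarding, one basic open at a time, any $O_n$ that would otherwise persist in the intersection while keeping the descent infinite, using again that well-foundedness fails below each $W_k$ --- I choose along the chain encoding points $y_k$ and reserved points $r_k\ne y_k$ and define $f\colon 2^{\N}\to\mathcal P(X)$ by $f(x)=\{y_k:x(k)=1\}$. Then $f$ is continuous, the reserved points keep $f(x)$ codense in the relevant witnessing open, and the intended equivalence is that $f(x)$ is dense and codense in that open --- hence non-monitorable --- precisely when infinitely many bits are on, while if $x$ is eventually $0$ the selection is sparse, so by the empty-interior refinement $f(x)$ is nowhere dense and therefore monitorable by proposition~\ref{prop:beg}(\ref{prop:beg:4}). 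This makes $\mathcal M(X)$ $\bsigma^0_2$-hard.

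The main obstacle is the negative half of the last step: one must rule out \emph{spurious witnesses}, ensuring that when $x$ is eventually $0$ the set $f(x)$ is not accidentally dense and codense in some open set unrelated to the chain. This is exactly where the empty-interior refinement is needed, and where the local geometry of $U^\ast$ must be taken into account. When $U^\ast$ carries a branching family of pairwise disjoint opens (as in $\Q$), a single descending chain is not enough --- the points must be organised along a scheme indexed by a tree, so that ``density in the witnessing open'' is genuinely the infinitary condition driving non-monitorability; the irreducible case, where any two non-empty opens meet (as in the cofinite topology), is instead handled by the chain directly, where density reduces to being infinite. In all cases the construction delivers at least $\bsigma^0_2$-hardness, which is what is claimed; pinning down the exact level ($\bsigma^0_2$ versus $\bpi^0_3$) is what the subsequent classification, summarised in example~\ref{ex:exmpls}, takes care of.
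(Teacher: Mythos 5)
Your opening move is correct and is genuinely different from the paper's: the paper extracts its witness open set from a convergent sequence of monitorable sets (lemma \ref{lem:nominopenset}), whereas you argue directly that if every non-empty open set contained an atom, then the complement of $\mathcal M(X)$ would be the union, over the (pairwise disjoint, hence countably many) atoms $M$, of the open sets $\{A\mid \emptyset\ne A\cap M\ne M\}$, so $\mathcal M(X)$ would be closed; contrapositively you get an atomless non-empty open $U^*$. That argument is sound and arguably cleaner. The problem is everything after it: the reduction is where the theorem actually lives, and your construction has a genuine gap in \emph{both} directions. For the negative direction, ``$x$ eventually zero $\Rightarrow f(x)$ nowhere dense'' does not follow from your empty-interior refinement: a finite set $\{y_{k_1},\dots,y_{k_m}\}$ is nowhere dense if and only if each $\Int{\Cl{\{y_{k_i}\}}}=\emptyset$, a condition on the individual chosen points that is completely independent of $\bigcap_k W_k$ having empty interior. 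For the positive direction, infinite selections must be dense and codense in some fixed open set, and an arbitrary strictly descending chain provides no such set; you concede this for $\Q$ and appeal to a ``tree-indexed scheme,'' but that scheme is never defined, and defining it together with its verification is precisely the substance of the proof (it is the paper's second construction, indexed by a countable basis with the paired points $x_m,y_m$).

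Moreover, your proposed repair dichotomy (branching versus irreducible, with ``density reduces to being infinite'' in the irreducible case) is the wrong case split, and the irreducible half of your claim is false. Concretely, let $X=\N$ with non-empty opens exactly the sets containing $0$ and cofinitely many even numbers. This space is countable, second countable, hyperconnected and atomless (so $\mathcal M(X)$ is not closed), yet the set of all odd numbers is nowhere dense. The chain $W_k=\{0\}\cup\{\text{evens}\geqslant 2k\}\cup\{\text{odds}\geqslant 2k+1\}$ with encoding points $y_k=2k+1$ and reserved points $r_k=2k$ meets every requirement you impose (strict descent, $\bigcap_k W_k=\{0\}$ has empty interior), but then \emph{every} $f(x)$ is nowhere dense, hence monitorable, and nothing is reduced. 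The correct dichotomy, used by the paper, is whether the witness open contains a point $x^*$ with $\Int{\Cl{\{x^*\}}}\ne\emptyset$. If so, the chain must be \emph{anchored}: one takes a strictly decreasing neighbourhood \emph{basis} of $x^*$ inside $\Int{\Cl{\{x^*\}}}$ (in the example above, $x^*=0$, which forces the encoding points to be even), so that every open subset of $\Int{\Cl{\{x^*\}}}$ contains a tail of the chain; this is what makes infinite selections dense and eventually-avoided selections nowhere dense. If no such $x^*$ exists, then all points have nowhere dense closures, finite sets are automatically monitorable (lemma \ref{lem:finsmot}), and density of infinite selections is achieved by the basis-indexed double enumeration --- the ``tree-like'' construction you postponed. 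Without these two anchoring conditions, your chain argument cannot be completed.
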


The proof of theorem \ref{thm:wishful} is obtained through a series of lemmas.

Assume that $ \mathcal M (X)$ is not closed in $ \mathcal P (X)$ and let $B\in \mathcal P (X)\setminus \mathcal M (X)$ be such that there exists a sequence of sets $A_n\in \mathcal M (X)$ with $\lim_{n\rightarrow\infty }A_n=B$.
Let $V$ be a non-empty, open subset of $X$ in which $B$ is dense and codense.

\begin{lem} \label{lem:nominopenset}
There is no minimal non-empty, open subset of $V$.
\end{lem}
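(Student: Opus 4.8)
The plan is to argue by contradiction. Suppose $U$ is a minimal non-empty open subset of $V$, \ie\ an atom of the lattice of open subsets of $X$ lying below $V$. The first step is to record that $B$ remains dense and codense in $U$: since $B$ is dense and codense in $V$ we have $V \subseteq \Cl{B} \cap \Cl{X \setminus B}$, and as $U \subseteq V$ this inclusion passes to $U$. In particular $U \cap B \neq \emptyset$ and $U \setminus B \neq \emptyset$, so we may fix points $x \in U \cap B$ and $y \in U \setminus B$.

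The second step is to translate monitorability at $U$ into a simple combinatorial dichotomy, exploiting the minimality of $U$. For any $A \subseteq X$, the set $A$ fails to be dense in $U$ precisely when some non-empty open $W \subseteq U$ misses $A$; by minimality the only such $W$ is $U$ itself, so $A$ is not dense in $U$ if and only if $A \cap U = \emptyset$. Dually, $A$ is not codense in $U$ if and only if $U \subseteq A$. Hence the definition of monitorability, applied to the open set $U$, forces every monitorable set $A$ to satisfy $A \cap U = \emptyset$ or $U \subseteq A$; that is, no monitorable set can split $U$.

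Finally I invoke the convergence $A_n \to B$ in $\mathcal P(X) = 2^X$, which is exactly pointwise convergence of characteristic functions since $X$ is countable. From $x \in B$ we get $x \in A_n$ for all large $n$, whence $A_n \cap U \neq \emptyset$; from $y \notin B$ we get $y \notin A_n$ for all large $n$, whence $U \not\subseteq A_n$. For such $n$ the monitorable set $A_n$ both meets $U$ and fails to contain it, contradicting the dichotomy of the previous step. This contradiction shows that no minimal non-empty open $U \subseteq V$ can exist.

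I expect the only delicate point to be the second step, namely the clean equivalence between the topological notions of (co)density in $U$ and the purely set-theoretic conditions $A \cap U = \emptyset$ and $U \subseteq A$; everything rests on the observation that a minimal non-empty open set is its own unique non-empty open subset. Once that translation is in place, the contradiction falls out immediately from pointwise convergence together with the fact that $B$ genuinely splits $U$, so no substantial obstacle remains.
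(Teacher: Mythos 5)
Your proof is correct and is essentially the paper's own argument: pick $x \in B \cap U$ and $y \in U \setminus B$ using density and codensity of $B$ in $U$, use pointwise convergence to get $x \in A_n$ and $y \notin A_n$ eventually, and conclude via minimality of $U$ that such an $A_n$ is dense and codense in $U$, contradicting its monitorability. Your second step merely spells out explicitly the dichotomy (monitorable sets either miss or contain $U$) that the paper leaves implicit.
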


\begin{proof}
Assume towards contradiction that $U$ is a minimal non-empty, open subset of $V$.
Then $\emptyset\ne B\cap U\ne U$, so let $x\in B\cap U,y\in U\setminus B$.
So eventually $x\in A_n,y\notin A_n$, but then $A_n$ is dense and codense in $U$, a contradiction.
\end{proof}

In particular, the following holds.

\begin{cor} \label{lem:openinfinite}
All non-empty, open subsets of $V$ are infinite.
\end{cor}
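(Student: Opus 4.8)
The plan is to argue by contradiction, exploiting finiteness together with Lemma~\ref{lem:nominopenset}. Suppose, towards a contradiction, that some non-empty open subset $W\subseteq V$ is finite. I would then consider the family $\mathcal F$ of all non-empty open subsets of $V$ that are contained in $W$. This family is non-empty, since $W\in\mathcal F$, and it is finite, because $W$ has only finitely many subsets and $\mathcal F$ consists of some of them. Being a non-empty finite family partially ordered by inclusion, $\mathcal F$ admits a minimal element $U_0$.

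The crux is to upgrade minimality of $U_0$ within $\mathcal F$ to minimality among \emph{all} non-empty open subsets of $V$. If $U'$ is any non-empty open subset of $V$ with $U'\subseteq U_0$, then $U'\subseteq U_0\subseteq W$, so $U'\in\mathcal F$; minimality of $U_0$ in $\mathcal F$ then forces $U'=U_0$. Hence $U_0$ is a minimal non-empty open subset of $V$, directly contradicting Lemma~\ref{lem:nominopenset}. This contradiction shows that no non-empty open subset of $V$ can be finite.

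There is no real obstacle here, since the whole content is the remark that a non-empty finite poset of open sets has a minimal element, which is automatically minimal in the ambient family of open subsets of $V$. The only point requiring any care is precisely this inheritance of minimality from $\mathcal F$ to the full family, and it is immediate from the inclusions $U'\subseteq U_0\subseteq W$.
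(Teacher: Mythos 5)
Your proof is correct and matches the paper's intended argument: the paper states this corollary without proof as an immediate consequence of Lemma~\ref{lem:nominopenset}, the implicit reasoning being exactly yours, namely that a finite non-empty open subset of $V$ would contain a minimal non-empty open subset of $V$. Your write-up just makes explicit the (easy) point that minimality within the finite family of open sets contained in $W$ gives minimality among all non-empty open subsets of $V$.
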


Suppose there exists $x^*\in V$ such that $ \Int{ \Cl{ \{ x^*\} } } \ne\emptyset $.
Note that every non-empty, open subset of $ \Int{ \Cl{ \{ x^*\} } } $ is a neighbourhood of $x^*$.
By lemma \ref{lem:nominopenset}, let $\{ U_n\mid n\in \N \} $ be a strictly decreasing basis of open neighbourhoods of $x^*$, with $U_0\subseteq \Int{ \Cl{ \{ x^*\} } } $.
For every $n\in \N $ pick $x_n\in U_n\setminus U_{n+1}$.

Let $f: \Can \to \mathcal P (X)$ be defined by letting $f(\alpha )=\{ x_n\mid\alpha (n)=1\} $.
Then $f$ is continuous and $f(\alpha )\in \mathcal M (X)\Leftrightarrow\forall^{\infty }n\in \N \ \alpha (n)=0$.
Indeed, $f(\alpha )$ is codense in $ \Int{ \Cl{ \{ x^*\} } } $ as $x^*\notin f(\alpha )$; moreover, if $\exists^{\infty }n\in \N \ \alpha (n)=1$ then $f(\alpha )$ is also dense in $ \Int{ \Cl{ \{ x^*\} } } $.
If instead $\forall^{\infty }n\in \N \ f(\alpha )=0$, then eventually $U_n\cap f(\alpha )=\emptyset $; it follows that $f(\alpha )$ is not dense in any non-empty open set.

As $\{\alpha\in \Can \mid\forall^{\infty }n\in \N \ \alpha (n)=0\} $ is $ \bsigma^0_2$-complete by \cite[\S 23.A]{Kechris1995}, $ \mathcal M (X)$ is $ \bsigma^0_2$-hard.

Therefore, it can be assumed that
\begin{equation} \label{eq:intemp}
\forall x\in V\ \Int{ \Cl{ \{ x\} } } =\emptyset
\end{equation}

\begin{lem} \label{lem:finsmot}
If $A$ is finite and $A\subseteq V$, then $A\in \mathcal M (X)$.
\end{lem}

\begin{proof}
Let $U$ be non-empty and open.
Then $U\setminus \Cl{A} $ is non-empty by \eqref{eq:intemp} and open, so $A$ is not dense in $U$.
\end{proof}

Let $\{ U_n\mid n\in \N \} $ be an enumeration without repetitions of an open basis for the induced topology of $V$.
Let also $\{ n_m\mid m\in \N \} $ be an enumeration of $ \N $ with every natural number appearing infinitely many times.
For every $m\in \N $ pick recursively distinct elements $x_m,y_m\in U_{n_m}\setminus\bigcup_{h<m}\{ x_h,y_h\} $: this is possible by corollary \ref{lem:openinfinite}.
Finally, for every $n\in \N $ let $\{ u_{nk}\mid k\in \N \} $ be an enumeration without repetition of $\{ x_m\mid U_{n_m}=U_n\} $.

Define $f: \Can \to \mathcal P (X)$ by letting $f(\alpha )=\bigcup_{\alpha (h)=1}\{ u_{0h},u_{1h},\ldots ,u_{hh}\} $, which is codense in $V$ as it misses $\{ y_m\mid m\in \N \} $.
The function $f$ is continuous.
Moreover:
\begin{itemize}
\item if $\{ h\in \N \mid\alpha (h)=1\} $ is finite, then $f(\alpha )\in \mathcal M (X)$ by lemma \ref{lem:finsmot}
\item if $\{ h\in \N \mid\alpha (h)=1\} $ is infinite, then $f(\alpha )$ is dense in $V$, so $f(\alpha )\notin \mathcal M (X)$
\end{itemize}

This reduces the $ \bsigma^0_2$-complete set $\{\alpha\in \Can \mid\forall^{\infty }n\in \N \ \alpha (n)=0\} $ to $ \mathcal M (X)$, concluding the proof of theorem \ref{thm:wishful}.

\begin{cor}
If $X$ is a countable, second countable space then $ \mathcal M (X)$ cannot be $ \bsigma^0_1$-complete, nor $ \bdelta^0_2$-sharp, nor $ \bpi^0_2$-complete.
\end{cor}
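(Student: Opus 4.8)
The plan is to reduce everything to the dichotomy supplied by Theorem~\ref{thm:wishful}: for $X$ countable and second countable, either $\mathcal M(X)\in\bpi^0_1(\mathcal P(X))$, or $\mathcal M(X)$ is $\bsigma^0_2$-hard. Together with the standard inclusions among the low-level boldface pointclasses in a Polish space recalled in Section~\ref{sec:basics-top}, namely $\bsigma^0_1\cup\bpi^0_1\subseteq\bdelta^0_2\subseteq\bsigma^0_2\cap\bpi^0_2$, this dichotomy already pins down enough information to exclude all three forbidden complexities at once. I would also use the basic consequence of closure under continuous preimages that a $\bsigma^0_2$-hard set belongs to neither $\bpi^0_2$ nor $\bdelta^0_2$ (reduce a $\bsigma^0_2$-complete set that is not $\bpi^0_2$ into it), and that a $\bsigma^0_1$-complete set, being $\bsigma^0_1$-hard, cannot be closed.

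First I would dispose of the horn in which $\mathcal M(X)$ is closed. Here $\mathcal M(X)\in\bpi^0_1\subseteq\bsigma^0_2$. Being in $\bsigma^0_2$ rules out $\bpi^0_2$-completeness, which forces a set outside $\bsigma^0_2$; being in $\bpi^0_1$ rules out $\bdelta^0_2$-sharpness, which by definition forbids membership in $\bpi^0_1$; and being closed rules out $\bsigma^0_1$-completeness, since a $\bsigma^0_1$-complete set is not closed.

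Next I would treat the horn in which $\mathcal M(X)$ is $\bsigma^0_2$-hard. Here $\mathcal M(X)\notin\bpi^0_2$, hence also $\mathcal M(X)\notin\bdelta^0_2$ and $\mathcal M(X)\notin\bsigma^0_1$. The first fact rules out $\bpi^0_2$-completeness (which requires membership in $\bpi^0_2$); the second rules out $\bdelta^0_2$-sharpness (which requires membership in $\bdelta^0_2$); and the third rules out $\bsigma^0_1$-completeness (which requires membership in $\bsigma^0_1$). Since the two horns are exhaustive, the three complexities are impossible in all cases.

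What little there is to watch is not a genuine obstacle but a matter of pointclass bookkeeping: I must make sure that $\bsigma^0_2$-hardness really excludes membership in $\bpi^0_2$, and that $\bsigma^0_1$-hardness really excludes being closed, both of which are immediate from the closure of boldface pointclasses under continuous preimages applied to witnessing complete sets. No further construction is needed; the statement is a purely formal consequence of Theorem~\ref{thm:wishful} and the elementary facts collected in Section~\ref{sec:basics-top}.
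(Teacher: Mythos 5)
Your proposal is correct and is essentially the paper's own argument: the paper states this corollary without proof, as an immediate consequence of Theorem~\ref{thm:wishful}, and your two-horn case analysis (closed versus $\bsigma^0_2$-hard, plus the standard pointclass bookkeeping in the Polish space $\mathcal P(X)$) is exactly the intended derivation.
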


It remains to deal with the last possible case, namely $ \bpi^0_3$-completeness.

Fix a countable, second countable space $X$.
Let $\{ U_n\}_n$ be an enumeration of all minimal non-empty, open sets, and let $H=\bigcup_nU_n$.
Let $L= \Int{X\setminus H} $.

\begin{lem} \label{eq:conddec}
Let $A\in \mathcal P (X)$.
Then
\[
A\in\mathcal M (X)\Leftrightarrow\forall n\ (U_n\subseteq A\lor A\cap U_n=\emptyset )\land A\cap L\in \mathcal M (L)
\]
\end{lem}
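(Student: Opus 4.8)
The plan is to prove the two implications of the biconditional separately, resting on two structural observations that I would record first. The first is a subspace fact: since $L=\Int{X\setminus H}$ is open in $X$, every non-empty open $W\subseteq L$ is open in $X$, and for any non-empty open $W'\subseteq W$ one has $W'\cap A\ne\emptyset$ iff $W'\cap(A\cap L)\ne\emptyset$, and $W'\cap(X\setminus A)\ne\emptyset$ iff $W'\cap(L\setminus(A\cap L))\ne\emptyset$ (because $W'\subseteq L$). Consequently $A$ is dense, respectively codense, in $W$ as computed in $X$ if and only if $A\cap L$ is dense, respectively codense, in $W$ as computed in $L$. The second observation is that each minimal non-empty open set $U_n$ carries the indiscrete subspace topology, so its only non-empty open subset is $U_n$ itself; hence $A$ is simultaneously dense and codense in $U_n$ if and only if $\emptyset\ne A\cap U_n\ne U_n$.

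For the forward implication I would assume $A\in\mathcal M(X)$. Applying the definition of monitorability to the open set $U_n$ together with the minimality observation, $A$ cannot be both dense and codense in $U_n$, so $U_n\subseteq A$ or $A\cap U_n=\emptyset$; this yields the first conjunct for every $n$. For the second conjunct I would take an arbitrary non-empty open $W\subseteq L$: monitorability of $A$ in $X$ gives that $A$ is not dense in $W$ or not codense in $W$ in $X$, and the subspace fact transfers this to $A\cap L$ in $L$, so $A\cap L\in\mathcal M(L)$.

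For the converse I would assume both conjuncts and suppose towards a contradiction that $A$ is dense and codense in some non-empty open $U$. The key dichotomy is whether $U$ meets $H$. If $U\cap H\ne\emptyset$, then $U\cap U_n$ is a non-empty open subset of the minimal set $U_n$ for some $n$, forcing $U_n\subseteq U$; since $A$ is dense and codense in $U$, the open set $U_n$ meets both $A$ and $X\setminus A$, so $\emptyset\ne A\cap U_n\ne U_n$, contradicting the first conjunct. Otherwise $U\cap H=\emptyset$, and as $U$ is open this gives $U\subseteq\Int{X\setminus H}=L$; the subspace fact then shows that $A\cap L$ is dense and codense in $U$ as a subset of $L$, contradicting $A\cap L\in\mathcal M(L)$. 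In either case we reach a contradiction, so $A\in\mathcal M(X)$.

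The only delicate points I anticipate are bookkeeping ones: checking that \emph{codensity}, and not merely density, transfers correctly between $X$ and $L$ in the subspace fact, and noting that an open set disjoint from $H$ is automatically contained in $L$ by definition of $L$ as an interior. Everything else is a direct application of the minimality of the $U_n$ and of the definition of monitorability.
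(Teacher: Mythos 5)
Your proof is correct and follows essentially the same route as the paper's: the forward direction applies monitorability to each minimal $U_n$ (using that its subspace topology is indiscrete) and to open subsets of $L$, and the converse uses the same dichotomy — a witnessing open set $U$ either meets $H$ (forcing some $U_n\subseteq U$ by minimality) or is disjoint from $H$ and hence contained in $L=\Int{X\setminus H}$. You simply make explicit two routine points the paper leaves implicit, namely the transfer of density/codensity between $X$ and the open subspace $L$, and the fact that an open set disjoint from $H$ lies in $L$.
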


\begin{proof}
If $A$ is monitorable then for every $n$ either $A\cap U_n=\emptyset $ or $U_n\subseteq A$, since every non-empty, proper subset of $U_n$ is dense and codense in $U_n$; moreover $A$ cannot be both dense and codense in some non-empty, open subset of $L$, so $A\cap L\in \mathcal M (L)$.

Conversely, assume that $A$ is not monitorable and let $U$ be non-empty, open, and such that $A$ is dense and codense in $U$.
If $U\subseteq L$, then $A\cap L$ is not monitorable in $L$.
Otherwise there exists $n$ such that $U_n\subseteq U$.
It follows that $\emptyset\ne A\cap U_n\ne U_n$.
\end{proof}

\begin{lem} \label{lem:verymany}
For every non-empty, open $U\subseteq L$ there exist infinitely many $x\in U$ such that $ \Int{ \Cl{\{ x\} } } =\emptyset $.
\end{lem}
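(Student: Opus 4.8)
The plan is to reduce the statement to a one‑point version and then bootstrap. First observe that, since $U\subseteq L=\Int{X\setminus H}$ is disjoint from $H$, no non‑empty open subset of $U$ can contain a minimal non‑empty open set: any such set would be one of the $U_n\subseteq H$, and so would meet $L\cap H=\emptyset$. Call a point $x$ \emph{thin} if $\Int{\Cl{\{x\}}}=\emptyset$ and \emph{fat} otherwise. It suffices to prove that every non‑empty open $U\subseteq L$ contains at least one thin point: granting this for all such $U$, I produce infinitely many by recursion, setting $U_0=U$, choosing a thin $x_i\in U_i$, and letting $U_{i+1}=U_i\setminus\Cl{\{x_i\}}$. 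Each $U_{i+1}$ is open and contained in $L$, and it is non‑empty because $\Cl{\{x_i\}}$ is nowhere dense (as $x_i$ is thin) and hence cannot cover the non‑empty open set $U_i$; moreover $x_j\notin\Cl{\{x_i\}}$ for $j>i$ whereas $x_i\in\Cl{\{x_i\}}$, so the chosen points are pairwise distinct.

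The core is the existence of a thin point, which I prove by contradiction, assuming every point of a non‑empty open $U\subseteq L$ is fat. The main tool is the specialization preorder $x\preccurlyeq y\Leftrightarrow x\in\Cl{\{y\}}$, under which open sets are upward closed and $\Cl{\{x\}}=\{z\mid z\preccurlyeq x\}$. A first observation is that $x$ is fat precisely when $x\in\Int{\Cl{\{x\}}}$: if $G=\Int{\Cl{\{x\}}}$ is non‑empty then, being open (hence upward closed) and contained in $\Cl{\{x\}}$, it must already contain $x$. For such $x$ one checks that the indistinguishability class $[x]=\{y\mid\Cl{\{y\}}=\Cl{\{x\}}\}$ equals $\bigcap\{W\mid W\text{ open},\ x\in W\}$; thus $[x]$ lies in every open neighbourhood of $x$ (in particular in $L$), and, since $G\subseteq\Cl{\{x\}}$ means $x$ is dense in $G$, it lies in every non‑empty open subset of $G$. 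The decisive point is that $[x]$ cannot be open: an open $[x]$ would, by indistinguishability of its points, be a minimal non‑empty open set contained in $L$, contradicting $L\cap H=\emptyset$. Since $[x]\subseteq G$ and $G$ is open, it follows that $[x]\subsetneq G$.

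To finish, fix $x\in U$ and put $G=\Int{\Cl{\{x\}}}\cap U$, an open neighbourhood of $x$ contained in $U$ in which $x$ is dense. Choose $y\in G\setminus[x]$: then $y\preccurlyeq x$ (as $y\in G\subseteq\Cl{\{x\}}$) but not $x\preccurlyeq y$ (as $y\notin[x]$). Now $y$ is fat as well, so $\Int{\Cl{\{y\}}}\cap U$ is a non‑empty open subset of $U$; intersecting with $G$ gives a non‑empty open subset of $G$ (it contains $y$), which therefore contains $x$ by density. But this subset is contained in $\Cl{\{y\}}$, whence $x\in\Cl{\{y\}}$, i.e.\ $x\preccurlyeq y$, contradicting the previous line. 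Hence $U$ contains a thin point, completing the reduction.

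The step I expect to be the main obstacle is controlling fat points through the preorder, namely proving that fatness localises the density of a singleton to an honest open set $G$ and that $[x]$ coincides with the intersection of the neighbourhoods of $x$. The difficulty is that $X$ is not assumed $T_0$, so $[x]$ may contain several points and one must reason with indistinguishable points rather than with a single generic point. Once this dichotomy is in place—either $[x]$ is open, yielding a forbidden minimal open set inside $L$, or there is a strictly smaller fat point $y$ that collapses the preorder—the contradiction is immediate.
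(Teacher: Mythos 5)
Your proof is correct, and its mathematical kernel coincides with the paper's: both arguments rest on the observation that a ``fat'' point $x$ (one with $\Int{\Cl{\{x\}}}\neq\emptyset$) must itself lie in $\Int{\Cl{\{x\}}}$ and be dense there, and both reach the final contradiction through the fact that no minimal non-empty open set can be contained in $L$, since such a set would lie in $H$ while $L\cap H=\emptyset$. The packaging differs in two places. For the reduction to the ``all points fat'' situation, the paper denies the lemma outright, so that the thin points of $U$ form a finite set, and deletes the closure of that finite set (a closed nowhere dense set) in a single step, whereas you prove the existence of one thin point and iterate, peeling off $\Cl{\{x_i\}}$ at each stage; the two devices are interchangeable. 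For the endgame, the paper exhibits an actual minimal non-empty open set, namely $W=\Int{\Cl{\{x\}}}\cap V$, by proving $W\subseteq\Int{\Cl{\{y\}}}$ for every $y\in W$ with the help of the regular-open identity $\Int{\Cl{\Int{\Cl{A}}}}=\Int{\Cl{A}}$; you never exhibit a minimal open set, but instead show that the indistinguishability class $[x]$ cannot be open (otherwise it would be such a minimal set inside $L$), pick $y\in G\setminus[x]$, and play the relation $x\preccurlyeq y$, obtained exactly as in the paper from the fatness of $y$ and the density of $x$, against $\neg(x\preccurlyeq y)$, which comes from $y\notin[x]$. Your route through the specialization preorder makes the failure of the $T_0$ axiom explicit and isolates precisely where fatness is used (namely, that $[x]$ is the intersection of the open neighbourhoods of $x$); the paper's version is more economical, since one application of the regular-open identity closes the argument without any order-theoretic bookkeeping.
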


\begin{proof}
Deny and let $\{ x_1,\ldots ,x_r\} =\{ x\in U\mid \Int{ \Cl{\{ x\} } } =\emptyset\} $.
As $ \Int{ \Cl{ \{ x_1,\ldots ,x_r\} } } =\emptyset $, let $V=U\setminus \Cl{ \{ x_1,\ldots ,x_r\} } =U\setminus\bigcup_{s=1}^r \Cl{\{ x_s\} } \ne\emptyset $.
Then $\forall x\in V\ \Int{ \Cl{\{ x\} } } \ne\emptyset $.
Fix any $x\in V$ and let $W= \Int{ \Cl{\{ x\} } } \cap V\ne\emptyset $.

We claim that $\forall y\in W\ W\subseteq \Int{ \Cl{\{ y \}} } $.
Indeed $x\in \Int{ \Cl{\{ y\} } } $, since $ \Int{ \Cl{\{ y\} } }$ is an open set containing $y$ and $y\in \Cl{ \{ x\} } $, hence $W\subseteq \Int{ \Cl{\{ x\}} } \subseteq \Int{ \Cl{ \Int{ \Cl{ \{ y\} } } } } = \Int{ \Cl{\{ y\} } } $.

As every singleton subset of $W$ is dense in $W$, it follows that $W$ is a minimal non-empty, open set, which is a contradiction.
\end{proof}

\begin{lem} \label{lem:conditionpi}
If there exist infinitely many pairwise disjoint open subsets of $L$, then $ \mathcal M (X)$ is $ \bpi^0_3$-complete.
\end{lem}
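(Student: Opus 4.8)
The plan is to complement the upper bound of Proposition~\ref{countseccount} with a proof that $\mathcal M(X)$ is $\bpi^0_3$-hard, by Wadge-reducing to its complement the $\bsigma^0_3$-complete set $S_3=\{\alpha\in 2^{\N^2}\mid\exists m\ \exists^{\infty}n\ \alpha(m,n)=0\}$ of Example~\ref{ex:exmpls}(4). Let $\{W_m\mid m\in\N\}$ be pairwise disjoint non-empty open subsets of $L$. I will arrange $f(\alpha)\subseteq\bigcup_mW_m\subseteq L$, so that $f(\alpha)$ is disjoint from every minimal open set $U_n$ (these lie in $H$, and $L\cap H=\emptyset$); hence the first conjunct in Lemma~\ref{eq:conddec} holds automatically and I am reduced to securing $f(\alpha)\in\mathcal M(X)\Leftrightarrow f(\alpha)\in\mathcal M(L)$. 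Thus it suffices to build a continuous $f\colon 2^{\N^2}\to\mathcal P(X)$ with $f(\alpha)\in\mathcal M(L)\Leftrightarrow\alpha\notin S_3$.

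The map is built columnwise, reproducing inside each $W_m$ the triangular device from the proof of Theorem~\ref{thm:wishful}. By second countability fix a countable basis $\{V_{m,k}\mid k\in\N\}$ of $W_m$. Using Lemma~\ref{lem:verymany} (each $V_{m,k}$ contains infinitely many points with empty $\Int{\Cl{\{\cdot\}}}$), recursively choose pairwise distinct points $u^{(m)}_{k,n}\in V_{m,k}$ (for $k\leqslant n$), each with $\Int{\Cl{\{u^{(m)}_{k,n}\}}}=\emptyset$, together with further distinct reserved points $z_{m,k}\in V_{m,k}$ never used below. Set
\[
f(\alpha)=\bigcup_{m\in\N}\ \bigcup_{n:\alpha(m,n)=0}\{u^{(m)}_{0,n},u^{(m)}_{1,n},\ldots,u^{(m)}_{n,n}\}.
\]
Membership of any fixed point of $X$ in $f(\alpha)$ depends on at most one coordinate of $\alpha$, so $f$ is continuous. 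The points $z_{m,k}$ meet every $V_{m,k}$ and avoid $f(\alpha)$, so $f(\alpha)$ is codense in each $W_m$, hence codense in $L$.

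For correctness, first suppose $\alpha\in S_3$ and pick $m$ with $\exists^{\infty}n\ \alpha(m,n)=0$: then each $V_{m,k}$ contains the points $u^{(m)}_{k,n}$ for the infinitely many on-indices $n\geqslant k$, so $f(\alpha)$ is dense, and by the previous paragraph also codense, in the non-empty open set $W_m$; thus $f(\alpha)\notin\mathcal M(L)$. Conversely suppose $\alpha\notin S_3$, so every row has only finitely many zeros and each $F_m:=f(\alpha)\cap W_m$ is a finite set of points $x$ with $\Int{\Cl{\{x\}}}=\emptyset$, hence nowhere dense.

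The only delicate step is then to show that $f(\alpha)=\bigcup_mF_m$ remains nowhere dense in $L$ despite being spread over infinitely many columns; this yields $f(\alpha)\in\mathcal M(L)$ and finishes the reduction. The key observation is that openness and disjointness of the $W_m$ localise closures: for any $F_m\subseteq W_m$ and any index $m_0$ one has $\Cl{\bigcup_mF_m}\cap W_{m_0}=\Cl{F_{m_0}}\cap W_{m_0}$, since a neighbourhood of a point of $W_{m_0}$ taken inside the open set $W_{m_0}$ meets no $F_m$ with $m\neq m_0$. Hence, given a non-empty open $G\subseteq L$: if $G$ misses every $W_m$ then it misses $\bigcup_mF_m$, so $G\cap\Cl{\bigcup_mF_m}=\emptyset$; and if $G$ meets some $W_{m_0}$, nowhere density of $F_{m_0}$ yields a non-empty open $G'\subseteq G\cap W_{m_0}$ disjoint from $\Cl{F_{m_0}}$, hence from $\Cl{\bigcup_mF_m}$ by the localisation. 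This proves $\Int{\Cl{f(\alpha)}}=\emptyset$, as needed. Since $S_3$ is $\bsigma^0_3$-complete and $f$ reduces it to $\mathcal P(X)\setminus\mathcal M(X)$, the set $\mathcal M(X)$ is $\bpi^0_3$-hard, and together with Proposition~\ref{countseccount} it is $\bpi^0_3$-complete.
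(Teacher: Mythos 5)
Your proof is correct and takes essentially the same route as the paper's: the same triangular device, placing in each of infinitely many pairwise disjoint open subsets of $L$ finite sets of points whose singleton closures have empty interior (via Lemma~\ref{lem:verymany}), with reserved points guaranteeing codensity, so that the complement of $S_3$ Wadge-reduces to $\mathcal M(X)$. The only differences are cosmetic: the paper argues monitorability directly in $X$ instead of routing through Lemma~\ref{eq:conddec}, and it leaves implicit the localisation step (closures cannot cross between the disjoint open sets) that you spell out when showing $f(\alpha)$ is nowhere dense.
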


\begin{proof}
Let $\{ V_r\mid r\in \N \} $ be an enumeration, without repetitions, of a family of pairwise disjoint, non-empty, open subsets of $L$, and for every $r\in \N $ let $\{ V_{rn}\mid n\in \N \} $ be an enumeration without repetitions of a basis of the induced topology on $V_r$.
Let also $\{ n_m\mid m\in \N \} $ be an enumeration of $ \N $ with every element repeated infinitely many times.
For every $r\in \N $ pick, by recursion on $m$, distinct elements $x_{rm},y_{rm}\in V_{rn_m}\setminus\bigcup_{h<m}\{ x_{rh},y_{rh}\} $, with $ \Int{ \Cl{\{ x_{rm}\} } } =\emptyset $, which is possible by lemma \ref{lem:verymany}.
Finally, for every $r,n\in \N $ let $\{ u_{rnk}\mid k\in \N \} $ be an enumeration without repetitions of $\{ x_{rm}\mid V_{rn_m}=V_{rn}\} $.

Define $f:2^{ \N^2}\to \mathcal P (X)$ by letting $f(\alpha )=\bigcup_{\alpha (r,h)=1}\{ u_{r0h},u_{r1h},\ldots ,u_{rhh}\} $, which is codense in $L$.
The function $f$ is continuous.
Moreover:
\begin{itemize}
\item if $\forall r\in \N \ \forall^{\infty }h\in \N \ \alpha (r,h)=0$, then $ \Int{ \Cl{f(\alpha )} } =\emptyset $, since every $f(\alpha )\cap V_r$ is a finite union of singletons whose closures have empty interior; so $f(\alpha )\in \mathcal M (X)$
\item if $\exists r\in \N \ \exists^{\infty }h\in \N \ \alpha (r,h)=1$, then $f(\alpha )$ is dense in $V_r$, so $f(\alpha )\notin \mathcal M (X)$
\end{itemize}
Therefore the $ \bpi^0_3$-complete set $\{\alpha\in 2^{ \N^2}\mid\forall r\in \N \ \forall^{\infty }h\in \N \ \alpha (r,h)=0\} $ Wadge reduces to $ \mathcal M (X)$, proving that $ \mathcal M (X)$ is $ \bpi^0_3$-complete.
\end{proof}

Therefore assume that there is no infinite family of pairwise disjoint open subsets of $L$.

\begin{lem}
There exists $N\in \N $ such that every family of pairwise disjoint open subsets of $L$ has cardinality at most $N$.
\end{lem}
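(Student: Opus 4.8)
The plan is to argue by contraposition: assuming that $L$ admits pairwise disjoint families of non-empty open sets of unboundedly large finite cardinality, I would construct an \emph{infinite} such family, contradicting the standing hypothesis that none exists; hence the finite sup is a genuine bound $N\in\N$. Call a non-empty open set $U\subseteq L$ \emph{rich} if for every $k\in\N$ it contains $k$ pairwise disjoint non-empty open subsets; the negation of the lemma says precisely that $L$ itself is rich.

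The heart of the argument is a splitting lemma: every rich $U$ can be written as the union of two disjoint non-empty open sets $V,W\subseteq U$, at least one of which is again rich. Granting this, I would run a fusion: set $R_0=L$ and, having a rich $R_m$, split it into disjoint non-empty opens of which one, say $V_m$, is rich and the other, $W_m$, merely non-empty; then put $R_{m+1}=V_m$ and output $W_m$. Since for $j>m$ we have $W_j\subseteq R_j\subseteq R_{m+1}=V_m$ while $W_m\cap V_m=\emptyset$, the sets $\{W_m\mid m\in\N\}$ are pairwise disjoint, non-empty and open, which is the desired contradiction.

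To prove the splitting lemma, the naive idea—pick two disjoint non-empty opens $V_1,V_2\subseteq U$ (available since a rich $U$ has $c(U)\ge 2$)—does not suffice, because $V_1$ and $V_2$ leave the frontier gap $U\setminus(V_1\cup V_2)$ uncontrolled, so no size estimate on $U$ follows. The fix I would use is to split $U$ into $V:=V_1$ and its exterior $W:=U\setminus\Cl{V}$: these are disjoint non-empty open sets (non-empty because $V$ is not dense in $U$, as it misses $V_2$), and their union is dense in $U$, since $U\setminus(V\cup W)\subseteq\Fr{V}$ is nowhere dense. With a \emph{dense} partition in hand I would prove the subadditivity estimate $c(U)\le c(V)+c(W)$, where $c(\cdot)$ denotes the supremum of the sizes of pairwise disjoint families of non-empty open subsets: any such family inside $U$ meets $V\cup W$ by density, so each member meets $V$ or meets $W$, and intersecting with $V$ (resp. $W$) produces a pairwise disjoint family witnessing that at most $c(V)$ (resp. $c(W)$) members occur. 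Since $U$ is rich, $c(U)=\infty$, so one of $c(V),c(W)$ is infinite, i.e. $V$ or $W$ is rich, completing the splitting lemma.

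I expect the genuine obstacle to be exactly this subadditivity step, and more precisely the recognition that one must partition $U$ by an open set together with the interior of the complement of its closure, rather than by two arbitrary disjoint opens: the frontier of the splitting set has to be absorbed into a nowhere dense remainder so that the density argument, and hence the inequality $c(U)\le c(V)+c(W)$, applies. Once subadditivity is available the fusion is routine, and, pleasantly, the argument uses neither second countability nor the absence of minimal open sets established for $L$.
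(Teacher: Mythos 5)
Your proof is correct, but it takes a genuinely different route from the paper's. The paper argues globally with maximal families: it observes that a family of non-empty, pairwise disjoint open sets is maximal exactly when its union is dense, builds a refining sequence $\mathcal W_0,\mathcal W_1,\dots$ of maximal such families of strictly increasing finite cardinality, organises all their members into a finitely splitting tree under reverse inclusion, and extracts (K\"onig-style) a branch through infinitely many branching nodes; choosing at each such node an immediate successor off the branch yields the contradicting infinite disjoint family. You instead work locally: your splitting lemma shows that ``richness'' is inherited by one piece of the pair $V$, $U\setminus\Cl{V}$, via the subadditivity estimate $c(U)\le c(V)+c(W)$ — which is exactly where the nowhere density of $\Fr{V}$ enters — and a straightforward fusion along the nested rich sets $R_m$ then produces the infinite disjoint family directly. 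The two arguments rest on the same topological fact: your dense pair $\{V,\,U\setminus\Cl{V}\}$ is precisely a maximal disjoint family of size two, so it plays the role of the paper's maximality observation; but your combinatorial extraction (splitting plus fusion) dispenses with maximal families, the refinement tree and the K\"onig-type branch selection altogether, making the inductive step more transparent, at the price of having to verify the subadditivity inequality, which the paper's cardinality bookkeeping avoids. Both arguments are valid in an arbitrary topological space, so your closing remark applies equally to the paper's proof. One small slip in your write-up: in first stating the splitting lemma you say a rich $U$ ``can be written as the union'' of $V$ and $W$; in general it cannot, and your own construction only gives that $V\cup W$ is dense in $U$ — but since the fusion uses nothing beyond disjointness and nesting, this looseness does not affect the argument.
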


\begin{proof}
Note that if $ \mathcal W $ is a family of non-empty, pairwise disjoint, open subsets of a topological space, then $ \mathcal W $ is maximal if and only if $\bigcup \mathcal W $ is dense.

Assume towards contradiction that there exist families of arbitrarily high finite cardinality of pairwise disjoint open subsets of $L$.
We define by recursion a sequence of maximal families $ \mathcal W_n$ of non-empty, pairwise disjoint, open subsets of $L$, with each $ \mathcal W_{n+1}$ refining $ \mathcal W_n$.

Let $ \mathcal W_0=\{ L\} $.

Assume $ \mathcal W_n$ is defined, and let $ \mathcal V $ be a maximal family of non-empty, pairwise disjoint, open subsets of $L$, with $ \card ( \mathcal V)> \card ( \mathcal W_n)$.
Set $ \mathcal W_{n+1}=\{ W\cap V\mid W\in \mathcal W_n,V\in \mathcal V ,W\cap V\ne\emptyset\} $; notice that $ \mathcal W_{n+1}$ is maximal and $ \card ( \mathcal W_{n+1})\geqslant \card ( \mathcal V )> \card ( \mathcal W_n)$.

Let $ \mathcal T $ be the tree whose nodes are the elements of $\bigcup_{n\in \N } \mathcal W_n$ with the order of reverse inclusion.
Since there are infinitely many branching nodes and $ \mathcal T $ is finite splitting, let $b$ be a branch of $ \mathcal T $ containing infinitely many of them.
If $W$ is a branching point belonging to $b$, let $W^+$ be an immediate successor of $W$ that does not belong to $b$.
Then the family of all such $W^+$ is infinite and consists of pairwise disjoint open subsets of $L$, a contradiction.
\end{proof}

Therefore let $ \mathcal W =\{ W_1,\ldots ,W_N\} $ be a family of maximal cardinality of non-empty, pairwise disjoint, open subsets of $L$.
Notice that each $W_h$ is a \emph{hyperconnected} space, meaning that there are no two disjoint, open, non-empty subsets of $W_h$.

\begin{lem} \label{lem:newcorr}
If $Y$ is a countable, second countable, hyperconnected space, then $ \mathcal M (Y)\in \bsigma^0_2( \mathcal P (Y))$.
\end{lem}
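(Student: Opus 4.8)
The plan is to exploit hyperconnectedness in order to collapse the existential quantifier over open sets in the characterisation \eqref{eq:charact}; it is exactly this quantifier that pushes the generic bound from $\bsigma^0_2$ up to $\bpi^0_3$ in proposition \ref{countseccount}. First I would record the basic consequence of hyperconnectedness: every non-empty open $V\subseteq Y$ is dense. Indeed, by hyperconnectedness $V$ meets every non-empty open subset of $Y$, so every neighbourhood of every point of $Y$ meets $V$, whence $\Cl{V}=Y$.

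The heart of the argument is the claim that, for such $Y$, a set $A$ fails to be monitorable if and only if $A$ is both dense and codense in the whole space, that is $\Cl{A}=Y=\Cl{Y\setminus A}$. For the forward direction, suppose $A$ is dense and codense in some non-empty open $U$. From $U\subseteq\Cl{A}$ and the fact that $\Cl{A}$ is closed we get $\Cl{U}\subseteq\Cl{A}$; but $\Cl{U}=Y$ by the previous observation, so $\Cl{A}=Y$. Applying the identical reasoning to $Y\setminus A$, starting from $U\subseteq\Cl{Y\setminus A}$, yields $\Cl{Y\setminus A}=Y$. The converse is immediate, since one may simply take $U=Y$, which is a non-empty open set in which $A$ is then dense and codense. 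Thus the ``$\exists U$'' ranging over non-empty open sets is replaced by the single instance $U=Y$.

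With this equivalence the complexity computation is routine. Fixing a countable basis $\mathcal B$ of non-empty open sets, the condition $\Cl{A}=Y$ reads $\forall V\in\mathcal B\ \exists x\in V\ (x\in A)$; for each fixed $V$ the inner formula is an open condition on $A$, so the universal quantifier over the countable family $\mathcal B$ exhibits $\{A\mid\Cl{A}=Y\}$ as a $\bpi^0_2$ set. Symmetrically $\{A\mid\Cl{Y\setminus A}=Y\}$ is $\bpi^0_2$, and their intersection, which by the claim is precisely $\mathcal P(Y)\setminus\mathcal M(Y)$, is again $\bpi^0_2$. Hence $\mathcal M(Y)\in\bsigma^0_2(\mathcal P(Y))$, as required.

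I expect the only genuine content to be the density argument underpinning the key claim; once ``dense and codense in some $U$'' is identified with ``dense and codense in $Y$'', the bound falls out mechanically. The one subtlety worth flagging is the degenerate case in which $Y$ has an isolated point: such a point must then lie in every non-empty open set, so no set can be simultaneously dense and codense, giving $\mathcal M(Y)=\mathcal P(Y)$, which is clopen and hence trivially within the claimed class. I would therefore phrase the key claim so that it remains valid (vacuously) in that case as well.
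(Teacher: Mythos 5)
Your proof is correct and follows essentially the same route as the paper's: both use hyperconnectedness to show that being dense and codense in some non-empty open set is equivalent to being dense and codense in all of $Y$, and then observe that the latter is a $\bpi^0_2$ condition via a countable basis. The only difference is cosmetic (you argue through closures of open sets, the paper through non-emptiness of intersections $A\cap U\cap V$), and your remark about isolated points is careful but unnecessary, since the equivalence as stated already holds vacuously in that case.
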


\begin{proof}
Notice that a subset $A$ of $Y$ is dense in $Y$ if and only if it is dense in some (equivalently, any) non-empty, open subset of $Y$.
Indeed, the forward implication holds for any topological space.
For the converse, fix $U$ non-empty and open in $Y$ and assume that $A$ is dense in $U$.
Let $V$ be any non-empty open set.
Then $\emptyset\ne A\cap U\cap V\subseteq A\cap V$.

Therefore, given any $A\subseteq Y$,
\begin{align*}
A\notin \mathcal M (Y)\Leftrightarrow & A \text{ is dense and codense in } Y\Leftrightarrow \\
\Leftrightarrow & \text{for every } U \text{ basic open set},A\cap U\ne\emptyset\ne U\setminus A
\end{align*}
which is a $ \bpi^0_2$-condition.
\end{proof}

Now, given $A\in \mathcal P (X)$, observe that $A\cap L\notin \mathcal M (L)\Leftrightarrow\exists h\in\{ 1,\ldots ,N\}\ A\cap W_h\notin \mathcal M (W_h)$.
This is a $ \bpi^0_2$ condition by lemma \ref{lem:newcorr}, hence $ \mathcal M (X)\in \bsigma^0_2( \mathcal P (X))$ by lemma \ref{eq:conddec}.

\begin{cor} \label{23112025}
Let $X$ be a countable, second countable space.
If $ \mathcal M (X)\notin \bsigma^0_2( \mathcal P (X))$, then $ \mathcal M(X)$ is $ \bpi^0_3$-complete.

In particular, $ \mathcal M (X)$ cannot be $ \bdelta^0_3$-sharp.
\end{cor}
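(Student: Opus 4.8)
The plan is to assemble the dichotomy already prepared by the preceding lemmas, rather than to prove anything genuinely new; this is why the statement is a corollary. Recall that, for the fixed countable second countable $X$, we have set $H$ to be the union of all minimal non-empty open sets and $L = \Int{X \setminus H}$, and that Lemma \ref{eq:conddec} reduces membership $A \in \mathcal M(X)$ to a clopen condition on the $U_n$ conjoined with the condition $A \cap L \in \mathcal M(L)$. Hence the whole complexity question for $\mathcal M(X)$ is controlled by the behaviour of $\mathcal M(L)$, and the clopen conjunct contributes nothing above $\bpi^0_2$.

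First I would split on whether $L$ admits an infinite family of pairwise disjoint non-empty open subsets. If it does, Lemma \ref{lem:conditionpi} directly yields that $\mathcal M(X)$ is $\bpi^0_3$-complete. If it does not, then the preceding lemma bounding the cardinality of such families produces a maximal finite family $\{W_1, \ldots, W_N\}$ of pairwise disjoint non-empty open subsets of $L$, each $W_h$ being hyperconnected. Lemma \ref{lem:newcorr} gives $\mathcal M(W_h) \in \bsigma^0_2$ for every $h$, and since $A \cap L \notin \mathcal M(L)$ is equivalent to $\exists h \le N\ A \cap W_h \notin \mathcal M(W_h)$, Lemma \ref{eq:conddec} then places $\mathcal M(X)$ in $\bsigma^0_2(\mathcal P(X))$. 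Contraposing, the hypothesis $\mathcal M(X) \notin \bsigma^0_2$ rules out the second alternative, forcing the first, so $\mathcal M(X)$ is $\bpi^0_3$-complete.

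For the final clause I would combine this with Proposition \ref{countseccount}, which always gives $\mathcal M(X) \in \bpi^0_3$. If $\mathcal M(X) \in \bsigma^0_2$, then it already lies in $\bsigma^0_\beta$ with $\beta = 2 < 3$, so it is not $\bdelta^0_3$-sharp by definition. Otherwise the first part makes $\mathcal M(X)$ a $\bpi^0_3$-complete, hence $\bpi^0_3$-hard, set; were it additionally $\bdelta^0_3$ it would be $\bdelta^0_3$-complete, which is impossible for index $3 \ge 2$. Thus $\mathcal M(X) \notin \bdelta^0_3$, so in particular it is not $\bdelta^0_3$-sharp, and the conclusion holds in both cases.

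I expect no real obstacle, since all the delicate constructions live in the earlier lemmas. The one point deserving a line of care is the equivalence $A \cap L \notin \mathcal M(L) \Leftrightarrow \exists h \le N\ A \cap W_h \notin \mathcal M(W_h)$: the nontrivial direction uses the maximality of $\{W_1, \ldots, W_N\}$, which makes $\bigcup_h W_h$ dense in $L$, so that any open set in which $A \cap L$ is dense and codense must meet some $W_h$ in a non-empty open set where $A \cap W_h$ is again dense and codense.
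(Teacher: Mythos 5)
Your proposal is correct and is essentially the paper's own argument: the corollary is stated in the paper without a separate proof precisely because it follows by contraposition from the dichotomy you assembled — Lemma \ref{lem:conditionpi} in the infinite-antichain case, and Lemma \ref{eq:conddec} together with the cardinality bound, hyperconnectedness of the $W_h$, and Lemma \ref{lem:newcorr} giving $\mathcal M(X)\in\bsigma^0_2$ otherwise. Your handling of the final clause (splitting into the two cases and using that no $\bdelta^0_3$ set can be $\bdelta^0_3$-complete) and your care about the density of $\bigcup_h W_h$ via maximality are both sound and match the paper's intent.
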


Figure \ref{fig:res} summarises the positions in the Borel hierarchy that are occupied by families of monitorable sets.

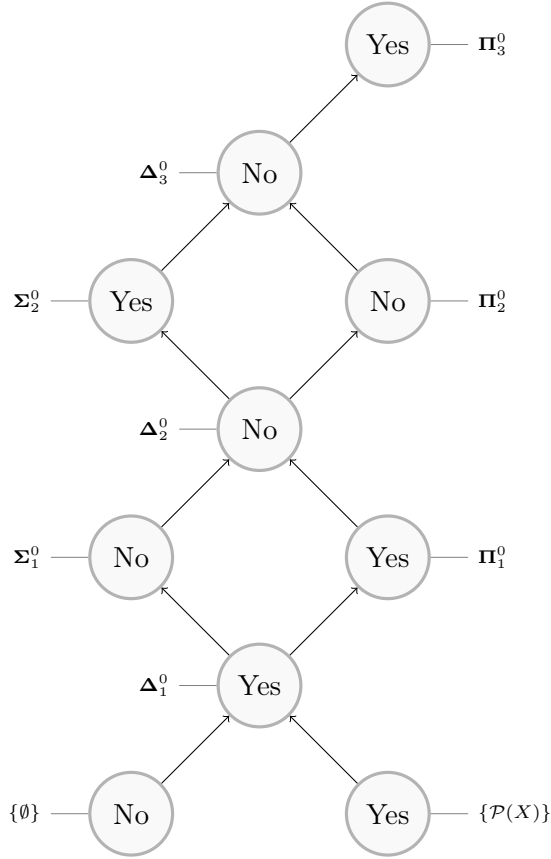
\begin{figure}
\begin{tikzpicture}[
roundnode/.style={circle, draw=gray!60, fill=gray!5, very thick, minimum size=11mm},
]
\node[roundnode]      (empty) [pin=180:{\tiny $\{\emptyset \} $}]                             {No};
\node (void0) [right=of empty] {};
\node[roundnode]      (all)       [right=of void0] [pin=0:{\tiny $\{ \mathcal P (X)\} $}] {Yes};
\node[roundnode]      (clopen)       [above=of void0] [pin=180:{\tiny $ \bdelta^0_1$}] {Yes};
\node (void1)       [above=of empty] { };
\node (void2)       [above=of all] { };
\node[roundnode] (open) [above=of void1] [pin=180:{\tiny $ \bsigma_1^0$}] {No};
\node[roundnode]      (closed)       [above=of void2] [pin=0:{\tiny $ \bpi^0_1$}] {Yes};
\node      (void3)       [above=of clopen] { };
\node[roundnode]      (delta02)       [above=of void3] [pin=180:{\tiny $ \bdelta_2^0$}] {No};
\node (void4) [above=of open] { };
\node (void5) [above=of closed] { };
\node[roundnode] (sigma02) [above=of void4] [pin=180:{\tiny $ \bsigma_2^0$}] {Yes};
\node[roundnode]      (pi02)       [above=of void5] [pin=0:{\tiny $ \bpi^0_2$}] {No};
\node (void6) [above=of delta02] {};
\node[roundnode] (delta03) [above=of void6] [pin=180:{\tiny $ \bdelta_3^0$}] {No};
\node (void7) [above=of pi02] { };
\node[roundnode]      (pi03)       [above=of void7] [pin=0:{\tiny $ \bpi^0_3$}] {Yes};
\draw[->] (empty) -- (clopen);
\draw[->] (all) -- (clopen);
\draw[->] (clopen) -- (open);
\draw[->] (clopen) -- (closed);
\draw[->] (open) -- (delta02);
\draw[->] (closed) -- (delta02);
\draw[->] (delta02) -- (sigma02);
\draw[->] (delta02) -- (pi02);
\draw[->] (sigma02) -- (delta03);
\draw[->] (pi02) -- (delta03);
\draw[->] (delta03) -- (pi03);
\end{tikzpicture}
\caption{The allowed positions of $ \mathcal M (X)$ in the Borel hierarchy, for $X$ a countable, second countable space}
\label{fig:res}
\end{figure}

\begin{rem} \label{231120251728}
The construction leading to corollary \ref{23112025} provides an operational way to recognise whether $ \mathcal M (X)$ is $ \bsigma^0_2$ or $ \bpi^0_3$-complete, for $X$ a countable, second countable topological space: starting from $X$, remove the union $H$ of all minimal non-empty, open sets; if inside $X\setminus H$ (equivalently, inside $L= \Int{X\setminus H} $) there are infinitely many pairwise disjoint open subsets of $X$, then $ \mathcal M (X)$ is $ \bpi^0_3$-complete; otherwise it is $ \bsigma^0_2$.
\end{rem}

\begin{exa}
Consider the space $E^{{<}\omega}$ of Example~\ref{ex:finstr} and let $a,b\in E$ be two distinct elements. 
Note that in this space there are no minimal open subsets. 
Moreover, the set 
$B = \{ (a^nb)^{\uparrow }\mid n \in \N \}$ 
is infinite and consists of pairwise disjoint open subsets of $E^{{<}\omega}$. 
Therefore, by the criterion of Remark~\ref{231120251728}, we get that $\mathcal{M}(E^{{<}\omega})$ is $\bpi^0_3$-complete. 
\end{exa}

\subsection{An application: transition relations}
 \label{sec:trn}
The results of section \ref{sec:sccnt} show that for a second countable topology on a countable set $X$ the family of monitorable subsets can be either simple (namely, a $ \bsigma^0_2$ subset of $ \mathcal P (X)$) or complicated (namely, a $ \bpi^0_3$-complete subset of $ \mathcal P (X)$).
Moreover, Remark \ref{231120251728} yields a procedure to distinguish the two situations.
In this section, we apply this procedure to a class of topologies induced by transition relations on a countable set, 
proving that most of them give rise to a simple family of monitorable subsets 
(Theorem \ref{thm:trn}). 

Let $X$ be a countable set, and let $E$ be a non-empty, at most countable set.
Recall that a transition relation on $X$ is a subset $R \subseteq X \times E \times X$. 
Then, for every $R\subseteq X\times E\times X$, we have a topology $ \mathcal T_R$ on $X$, generated by the family $ \mathcal B_R=\{ U_F^R\mid F \text{ a finite subset of } E^{<\omega }\} $ where
\[\begin{split} 
U_F^R = \{ q_0\in X\mid &\forall s\in F\ \exists q_1,\ldots ,q_{ \lnh (s)}\in X \\
  &\forall h\in\{ 0,\ldots , \lnh (s)-1\}\ (q_h,s(h),q_{h+1})\in R \}
\end{split}\] 
The family $ \mathcal B_R$ is obviously at most countable and indeed a basis, as $U_F^R\cap U_G^R=U_{F\cup G}^R$.
Therefore $ \mathcal T_R$ is second countable.

\begin{rem}
This topology has been considered in \cite{CicconeDF25} (even if the authors do not use explicitly a topological language) 
 as a semantic framework for runtime verification of branching-time properties.
Intuitively, the set $X$ is the state space of a system under analysis and the elements of $E$ are the possible observable events. 
Then, the topology $\mathcal T_R$ abstractly models monitors that can observe finitely many finite sequences of events generated by system executions. 
It is worth noticing that, as discussed in \cite{CicconeDF25}, this setting is quite different from others used in the literature \cite{FrancalanzaAI17}, which however use also a different notion of monitorability. 
\end{rem}

Let us endow the set $\mathcal{P}(X\times E \times X)$ with the product topology, after identifying each transition relation with its characteristic function. 
We consider the following two subsets of $\mathcal{P}(X\times E \times X)$: 
\begin{align*}
\mathcal R_0= & \{ R\in \mathcal P (X\times E\times X)\mid \mathcal M ( \mathcal T_R)\in \bsigma^0_2( \mathcal P (X))\} \\
\mathcal R_1= & \{ R\in \mathcal P (X\times E\times X)\mid \mathcal M ( \mathcal T_R) \text{ is } \bpi^0_3 \text{-complete} \} = \\
= & \mathcal P (X\times E\times X)\setminus \mathcal R_0
\end{align*}
Therefore, $\mathcal R_0$ collects those transition relations inducing a simple family of monitorable subsets, while $\mathcal R_1$ those whose family of monitorable subsets is complex. 
Recall that a subset $A$ of a topological space is \emph{meagre} if it is an at most countable union of nowhere dense sets, while it is \emph{comeagre} if its complement is meagre.

\begin{thm} \label{thm:trn}
\begin{enumerate}
\item\label{thm:trn:1} Both $ \mathcal R_0, \mathcal R_1$ are Borel subsets of $ \mathcal P (X\times E\times X)$
\item\label{thm:trn:2} The set $ \mathcal R_0$ is comeagre in $ \mathcal P (X\times E\times X)$; equivalently, the set $ \mathcal R_1$ is meagre in $ \mathcal P (X\times E\times X)$
\end{enumerate}
\end{thm}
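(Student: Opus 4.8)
The plan is to handle the two parts separately, deriving part~(\ref{thm:trn:1}) from the operational description of $\mathcal R_1$ in remark~\ref{231120251728} and part~(\ref{thm:trn:2}) from a genericity argument. The elementary observation underlying everything is that, for a fixed state $q\in X$ and a fixed finite $F\subseteq E^{<\omega}$, the condition $q\in U_F^R$ is \emph{open} in the variable $R$: reading a word $s$ from $q$ asks for a finite path, i.e.\ a finite conjunction of clopen conditions $(q_h,s(h),q_{h+1})\in R$ existentially quantified over the countably many choices of intermediate states, and $q\in U_F^R$ is the finite conjunction of these over $s\in F$. Consequently $U_F^R=\emptyset$ is closed, $U_F^R\subseteq U_G^R$ is $\bpi^0_2$, and every set-theoretic relation between basic open sets that we shall invoke is Borel in $R$. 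We may assume $X$ is infinite, since for finite $X$ the space $\mathcal P(X)$ is finite, every $\mathcal M(\mathcal T_R)$ is clopen hence in $\bsigma^0_2$, and both assertions are immediate.

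For part~(\ref{thm:trn:1}) I would first observe that the minimal non-empty open sets are exactly the minimal basic open sets $U_F^R$ (any non-empty open set contains some non-empty $U_F^R$, so minimality forces equality), and that $U_F^R$ is minimal precisely when it is non-empty and $\forall G\,(U_{F\cup G}^R=\emptyset\lor U_F^R\subseteq U_G^R)$, a Borel condition on $R$. Hence $q\in H$ and $q\in L=\Int{X\setminus H}$, the latter reading $\exists F\,(q\in U_F^R\land U_F^R\cap H=\emptyset)$, are Borel conditions. Finally, by remark~\ref{231120251728} together with the lemma guaranteeing a uniform finite bound on families of pairwise disjoint open subsets of $L$ whenever no infinite such family exists, $R\in\mathcal R_1$ if and only if for every $N$ the open set $L$ contains $N$ pairwise disjoint non-empty open sets; refining each member to a basic open set this reads $\forall N\,\exists F_1,\dots,F_N$ with $U_{F_i}^R\neq\emptyset$ and $U_{F_i}^R\subseteq L$ for all $i$ and $U_{F_i}^R\cap U_{F_j}^R=\emptyset$ for $i\neq j$. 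Each inner matrix is Borel, the existential over tuples of finite subsets of $E^{<\omega}$ is a countable union, and the outer $\forall N$ is a countable intersection, so $\mathcal R_1$ is Borel and $\mathcal R_0$ is its complement.

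For part~(\ref{thm:trn:2}) the idea is that a generic $R$ makes $\mathcal T_R$ hyperconnected. For each finite $F$ the set $D_F=\{R\mid U_F^R\neq\emptyset\}=\bigcup_{q\in X}\{R\mid q\in U_F^R\}$ is open, and it is dense: given any finite partial specification of $R$, fix any state $q_0$ and, using that $X$ is infinite, realise each word of $F$ by a path out of $q_0$ through freshly chosen distinct intermediate states; these new triples lie at coordinates disjoint from the finitely many already constrained, so the extension is admissible and forces $q_0\in U_F^R$. Thus $\bigcap_F D_F$ is comeagre. For every $R$ in this set all basic open sets are non-empty, and since $U_{F\cup G}^R\subseteq U_F^R\cap U_G^R$ is then non-empty as well, any two non-empty open sets meet; hence $\mathcal T_R$ is hyperconnected and lemma~\ref{lem:newcorr} gives $\mathcal M(\mathcal T_R)\in\bsigma^0_2(\mathcal P(X))$, i.e.\ $R\in\mathcal R_0$. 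Therefore $\mathcal R_0\supseteq\bigcap_F D_F$ is comeagre, equivalently $\mathcal R_1$ is meagre.

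The principal obstacle I expect is the measurability bookkeeping in part~(\ref{thm:trn:1}): one must check that every ingredient of the operational criterion---minimality of a basic open set, the union $H$, the open set $L$, and the existence of arbitrarily large finite families of pairwise disjoint open subsets of $L$---translates into a Borel condition on $R$, the crucial enabler being precisely that $q\in U_F^R$ is open in $R$, which keeps all the derived conditions at a finite level of the Borel hierarchy. By contrast the comeagreness in part~(\ref{thm:trn:2}) is conceptually short once one notices that forcing all basic open sets to be non-empty is a comeagre, open-dense-driven condition that makes $\mathcal T_R$ hyperconnected, after which lemma~\ref{lem:newcorr} closes the argument.
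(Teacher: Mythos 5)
Your proof is correct and follows essentially the same route as the paper's: part~(\ref{thm:trn:1}) renders the operational criterion of remark~\ref{231120251728} (together with the uniform finite bound lemma) Borel via the openness of $\{R\mid q\in U_F^R\}$ in $R$, and part~(\ref{thm:trn:2}) shows that the relations $R$ making $\mathcal T_R$ hyperconnected form a comeagre set and then applies lemma~\ref{lem:newcorr}, exactly as the paper does with its set $\mathcal H$. The only cosmetic variations are your equivalent rephrasing ``$U_{F_i}^R\subseteq L$'' in place of the paper's ``no $U_{F_h}^R$ contains a minimal non-empty open set'', and your comeagreness bookkeeping via the countably many dense open sets $D_F$ instead of the paper's presentation of the same set $\mathcal H$ as a dense $\bpi^0_2$ set, whose density is witnessed by a fresh state carrying self-loops for all events.
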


\begin{proof}
(\ref{thm:trn:1})  Given $q\in X$ and a finite subset $F$ of $E^{<\omega }$, the set $\{ R\in \mathcal P (X\times E\times X)\mid q\in U_F^R\} $ is open.
To see that $ \mathcal R_1$ is Borel, notice that, by the characterisation of section \ref{sec:sccnt}, one has $R\in \mathcal R_1$ if and only if for every $N\in \N $ there exist finite subsets $F_0,\ldots ,F_N$ of $E^{<\omega }$ such that
\begin{itemize}
\item every $U_{F_h}^R$ is non-empty
\item if $h\ne k$ then $U_{F_h}^R\cap U_{F_k}^R=\emptyset $
\item no $U_{F_h}^R$ contains a minimal non-empty, open set; this means that for every $F$ finite subset of $E^{<\omega }$ the following implication holds:
\[
\emptyset\ne U_F^R\subseteq U_{F_h}^R\Rightarrow\exists G\ \emptyset\ne U_G^R\subset U_F^R
\]
\end{itemize}

As the listed conditions are Borel, $ \mathcal R_1$ is Borel, whence $ \mathcal R_0$ is Borel as well.

(\ref{thm:trn:2}) To prove that $ \mathcal R_0$ is comeagre in $ \mathcal P (X\times E\times X)$ we show that it contains a dense $ \bpi^0_2$ set, namely $ \mathcal H =\{ R\in \mathcal P (X\times E\times X)\mid \mathcal T_R \text{ is hyperconnected} \} $ (see lemma \ref{lem:newcorr}).

Note that $R\in \mathcal H $ if and only if, for every $F,G$ finite subsets of $E^{<\omega }$, there exists $q\in U_F^R\cap U_G^R=U_{F\cup G}^R$.
Since the condition $q\in U_{F\cup G}^R$ is open in the variable $R$, it follows that $ \mathcal H \in \bpi^0_2( \mathcal P (X\times E\times X))$.

To show that $ \mathcal H $ is dense, fix a basic open set
\[ \begin{array}{l}
\mathcal U =\{ R\in \mathcal P (X\times E\times X)\mid \\
(q_1,e_1,q'_1),\ldots ,(q_n,e_n,q'_n)\in R, (q''_1,e'_1,q'''_1),\ldots ,(q''_m,e'_m,q'''_m)\notin R\}
\end{array} \]
Let $q\in X\setminus\{ q''_1\ldots ,q''_m,q'''_1,\ldots ,q'''_m\} $ and set $R=\{ (q_1,e_1,q'_1),\ldots ,(q_n,e_n,q'_n)\}\cup\{ (q,e,q)\mid e\in E\} $.
Then $R\in \mathcal U $.
Moreover, $R\in \mathcal H$ since $q\in U_F^R$ for every finite subset $F$ of $E^{<\omega }$.
\end{proof}

\subsection{The non-second countable case} \label{sec:nsc}
The hypothesis of second countability in proposition \ref{countseccount} and in lemma \ref{lem:newcorr} cannot be removed.

Let $X= \Seq \cup\{\infty\} $ be ordered by letting
\[
x\leqslant y\Leftrightarrow (x,y\in \Seq \land x\subseteq y)\lor y=\infty
\]
and endow $X$ with the Scott topology $ \mathcal T $.
A subset of $X$ is open if and only if it is either empty or an upward closed set intersecting every infinite branch of $ \Seq $.
Note that $\infty $ belongs to all non-empty open subsets of $X$, so $X$ is a hyperconnected space.

\begin{lem} \label{lem:hypc}
Let $Y$ be a non-empty hyperconnected space and let $A\subseteq Y$.
Then the following are equivalent:
\begin{enumerate}
\item $A\in \mathcal M (Y)$
\item $ \Fr{A} \ne Y$
\item $ \Int{A} \ne\emptyset\lor \Int{Y\setminus A} \ne \emptyset $
\end{enumerate}
\end{lem}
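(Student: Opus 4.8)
The plan is to decouple the three-way equivalence into $(1)\Leftrightarrow(2)$ and $(2)\Leftrightarrow(3)$, so as to isolate the single place where hyperconnectedness is actually used. The one property of $Y$ I will exploit is the standard reformulation of hyperconnectedness: a non-empty space is hyperconnected if and only if every non-empty open set is dense (indeed, if an open $U\ne\emptyset$ were not dense then $Y\setminus\Cl{U}$ would be a non-empty open set disjoint from $U$, and conversely two disjoint non-empty opens prevent either from being dense). Everything else will be elementary point-set topology.

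For $(1)\Leftrightarrow(2)$ I would start from the characterisation of monitorability in proposition \ref{prop:beg}(4), namely $A\in\mathcal M(Y)\Leftrightarrow\Int{\Fr{A}}=\emptyset$. Since $\Fr{A}=\Cl{A}\cap(Y\setminus\Int{A})$ is closed, it then suffices to prove the following general claim for a \emph{closed} subset $C$ of a non-empty hyperconnected $Y$: one has $\Int{C}=\emptyset$ if and only if $C\ne Y$. The direction $C=Y\Rightarrow\Int{C}\ne\emptyset$ is immediate since $\Int{Y}=Y\ne\emptyset$. For the converse, if $\Int{C}\ne\emptyset$ then $\Int{C}$ is a non-empty open set, hence dense by hyperconnectedness, so $Y=\Cl{\Int{C}}\subseteq\Cl{C}=C$, forcing $C=Y$. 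Applying this claim to $C=\Fr{A}$ gives $\Int{\Fr{A}}=\emptyset\Leftrightarrow\Fr{A}\ne Y$, which is exactly $(1)\Leftrightarrow(2)$.

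For $(2)\Leftrightarrow(3)$ no hyperconnectedness is needed. For $(3)\Rightarrow(2)$ I would use that $\Fr{A}$ is disjoint from both $\Int{A}$ and $\Int{Y\setminus A}$ (recall $\Fr{A}=\Fr{Y\setminus A}$), so a non-empty interior on either side already exhibits a point outside $\Fr{A}$, whence $\Fr{A}\ne Y$. For the contrapositive $\neg(3)\Rightarrow\neg(2)$, if $\Int{A}=\emptyset$ and $\Int{Y\setminus A}=\emptyset$, then $\Fr{A}=\Cl{A}$ (as $\Int{A}=\emptyset$) and $\Cl{A}=Y\setminus\Int{Y\setminus A}=Y$, so $\Fr{A}=Y$. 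The argument is short throughout; the only genuinely non-formal step, and hence the one to state carefully, is the density argument in $(1)\Leftrightarrow(2)$, where hyperconnectedness is what converts a non-empty interior of the closed set $\Fr{A}$ into all of $Y$.
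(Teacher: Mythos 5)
Your proof is correct, but it is organised differently from the paper's. The paper proves the cycle $(1)\Rightarrow(2)\Rightarrow(3)\Rightarrow(1)$, with the first two implications valid in an arbitrary non-empty topological space, and with hyperconnectedness invoked only in $(3)\Rightarrow(1)$: there, if $A$ is not monitorable, a non-empty open $U\subseteq\Fr{A}$ is disjoint from both $\Int{A}$ and $\Int{Y\setminus A}$, and since a hyperconnected space admits no two disjoint non-empty open sets, both interiors must be empty. You instead prove the two biconditionals $(1)\Leftrightarrow(2)$ and $(2)\Leftrightarrow(3)$, pushing the hyperconnectedness hypothesis into the direction $(2)\Rightarrow(1)$ via the density reformulation: if $\Int{\Fr{A}}\ne\emptyset$ then it is dense, and since $\Fr{A}$ is closed this forces $\Fr{A}=Y$. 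The two uses of the hypothesis are equivalent packagings of the same property, so the mathematical core coincides; what your route buys is the clean, reusable intermediate claim that in a non-empty hyperconnected space a closed set has empty interior if and only if it is proper (equivalently, every proper closed subset is nowhere dense), together with the sharper bookkeeping that $(2)\Leftrightarrow(3)$ needs no hypothesis on $Y$ whatsoever, in either direction. What the paper's route buys is economy: three implications instead of your four, two of them essentially trivial.
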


\begin{proof}
$(1)\Rightarrow (2)$ holds in any non-empty topological space, by proposition \ref{prop:beg}(4).

$(2)\Rightarrow (3)$ holds in any topological space, since if $ \Int{A} = \Int{Y\setminus A} =\emptyset $, then given any $x\in Y$ and any neighbourhood $U$ of $x$, it follows that $U$ intersects both $A$ and $Y\setminus A$, so $x\in \Fr{A} $; consequently $ \Fr{A} =Y$.

$(3)\Rightarrow (1)$.
If $A\notin \mathcal M (Y)$, let $U$ be non-empty, open, and such that $U\subseteq \Fr{A} $.
As $U\cap \Int{A} =U\cap \Int{Y\setminus A}=\emptyset $, from the hyperconnectedness hypothesis it follows that $ \Int{A} = \Int{Y\setminus A} =\emptyset $.
\end{proof}

If $\alpha\in \Bai $ and $n\in \N $, we denote by $\alpha_{|_n}$ the restriction of $\alpha $ to $n$, that is, the truncation of $\alpha $ to the first $n$ values.
If $t,s$ are finite sequences, we denote by $ts$ their concatenation.

\begin{lem} \label{lem:intnn}
Let $A\subseteq X$.
Then $ \Int{A} \ne\emptyset $ if and only if
\begin{equation} \label{eq:lemmaint}
\infty\in A\land\forall\alpha\in \Bai \ \exists n\in \N \ \forall s\in \Seq \ \alpha_{|_n}s\in A
\end{equation}
\end{lem}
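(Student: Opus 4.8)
The plan is to reduce everything to the concrete description of the Scott open sets recorded just before the statement and, for the substantial direction, to exhibit an explicit nonempty open subset of $A$. Throughout I would write $\uparrow x=\{y\in X\mid x\leqslant y\}$ for the up-set of a point, noting that for $t\in\Seq$ one has $\uparrow t=\{t\conc s\mid s\in\Seq\}\cup\{\infty\}$. The first thing I would record is the reformulation of openness: a nonempty $U\subseteq X$ is open exactly when it is upward closed and meets every infinite branch, \ie\ for each $\alpha\in\Bai$ there is $n$ with $\alpha_{|_n}\in U$; in particular, since $\infty$ is the top of the order, every nonempty open set contains $\infty$.

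For the forward implication I would take $U=\Int{A}$, which is nonempty and open with $U\subseteq A$. Nonemptiness and openness immediately give $\infty\in U\subseteq A$, the first conjunct of \eqref{eq:lemmaint}. For the second conjunct I would fix $\alpha\in\Bai$; since $U$ meets the branch of $\alpha$, some $\alpha_{|_n}\in U$, and upward closure yields $\uparrow(\alpha_{|_n})\subseteq U\subseteq A$, whose finite members are exactly the sequences $\alpha_{|_n}s$ with $s\in\Seq$.

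For the converse I would assume \eqref{eq:lemmaint} and set $U=\{x\in X\mid\uparrow x\subseteq A\}$. This set is upward closed because $x\leqslant y$ forces $\uparrow y\subseteq\uparrow x$; it contains $\infty$ since $\uparrow\infty=\{\infty\}\subseteq A$ by the first conjunct; and it meets every branch, because the second conjunct supplies, for each $\alpha$, an $n$ with $\alpha_{|_n}s\in A$ for all $s$, which together with $\infty\in A$ says precisely $\uparrow(\alpha_{|_n})\subseteq A$, \ie\ $\alpha_{|_n}\in U$. Hence $U$ is nonempty and open, and $x\in U$ implies $x\in\uparrow x\subseteq A$, so $U\subseteq A$ and $\Int{A}\neq\emptyset$.

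The whole argument is bookkeeping once the candidate $U=\{x\mid\uparrow x\subseteq A\}$ is in hand, so I do not expect a real obstacle. The only point needing care is the passage between the quantifier block $\forall s\in\Seq\ \alpha_{|_n}s\in A$ and the single containment $\uparrow(\alpha_{|_n})\subseteq A$: this works only because the up-set also carries the point $\infty$, which is exactly where the conjunct $\infty\in A$ enters, and getting this matching right is the main thing to watch.
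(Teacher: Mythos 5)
Your proof is correct and follows essentially the same route as the paper: the forward direction observes that the nonempty open set $\Int{A}$ itself satisfies \eqref{eq:lemmaint} (hence so does its superset $A$), and the converse builds an explicit nonempty open subset of $A$ from the witnesses $n$. The only cosmetic difference is that you take $U=\{x\mid\,\uparrow\! x\subseteq A\}$, the largest upward closed subset of $A$, whereas the paper takes the union of $\{\infty\}$ with the cones above the witnesses $\alpha_{|_{n_\alpha}}$; both verifications are the same.
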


\begin{proof}
Notice that property \eqref{eq:lemmaint} is satisfied by non-empty open sets and by any superset of a set satisfying it.
So if $ \Int{A} \ne\emptyset $, then \eqref{eq:lemmaint} holds for $A$.

Conversely, assume \eqref{eq:lemmaint}.
For every $\alpha\in \Bai $ let $n_{\alpha }\in \N $ be such that $\forall s\in \Seq\ \alpha_{|_{n_{\alpha }}}s\in A$.
Then $U=\{\infty\}\cup\bigcup_{\alpha\in \Bai }\{ t\in \Seq \mid \alpha_{|_{n_{\alpha }}}\subseteq t\}$ is open and $U\subseteq A$.
\end{proof}

Let $Tr$ be the Polish space of trees on $ \N $ and denote by $WF$ the subset of well founded trees.
Recall that $WF$ is complete coanalytic (see \cite[\S 33.A]{Kechris1995}).

\begin{prop} \label{24112025}
The family $ \mathcal M (X)$ is a complete coanalytic subset of $ \mathcal P (X)$.
\end{prop}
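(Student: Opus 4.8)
The plan is to prove both halves of ``complete coanalytic'' separately: first that $\mathcal M(X)\in\bpi^1_1(\mathcal P(X))$, and then that $\mathcal M(X)$ is $\bpi^1_1$-hard, by Wadge-reducing the set $WF$ of well founded trees to it. Both halves rely on the two preceding lemmas together with the fact, already noted in the text, that $X$ is hyperconnected.

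For the upper bound I would start from Lemma~\ref{lem:hypc}: since $X$ is hyperconnected, $A$ is monitorable if and only if $\Int{A}\ne\emptyset$ or $\Int{X\setminus A}\ne\emptyset$. By Lemma~\ref{lem:intnn}, the condition $\Int{A}\ne\emptyset$ reads $\infty\in A\land\forall\alpha\in\Bai\,\exists n\,\forall s\in\Seq\,\alpha_{|_n}s\in A$. For fixed $n$ and $s$ the clause $\alpha_{|_n}s\in A$ is clopen in $(\alpha,A)$, so $\exists n\,\forall s\,\alpha_{|_n}s\in A$ is $\bsigma^0_2$ in $(\alpha,A)$; universally quantifying this Borel matrix over the Polish space $\Bai$ yields a $\bpi^1_1$ condition on $A$, and intersecting with the clopen clause $\infty\in A$ keeps it $\bpi^1_1$. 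Thus $\{A:\Int{A}\ne\emptyset\}$ is coanalytic, and so is $\{A:\Int{X\setminus A}\ne\emptyset\}$ by applying the complementation homeomorphism of $\mathcal P(X)$; their union $\mathcal M(X)$ is therefore $\bpi^1_1$.

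For hardness I would exhibit the reduction $g\colon Tr\to\mathcal P(X)$ defined by $g(T)=(\Seq\setminus T)\cup\{\infty\}$. This is continuous, since for $t\in\Seq$ the bit $t\in g(T)$ depends only on the single coordinate $t\in T$, while $\infty\in g(T)$ always. The key computation is the equivalence $g(T)\in\mathcal M(X)\iff T\in WF$. First, $X\setminus g(T)=T$ never contains $\infty$, so $\Int{X\setminus g(T)}=\emptyset$ by Lemma~\ref{lem:intnn}; hence $g(T)$ is always codense and its monitorability reduces, via Lemma~\ref{lem:hypc}, to $\Int{g(T)}\ne\emptyset$. Applying Lemma~\ref{lem:intnn} to $g(T)$ (noting $\infty\in g(T)$), this says $\forall\alpha\,\exists n\,\forall s\,\alpha_{|_n}s\notin T$. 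Here I would invoke that $T$ is downward closed: since $\alpha_{|_n}$ is an initial segment of $\alpha_{|_n}s$, the clause $\forall s\,\alpha_{|_n}s\notin T$ holds exactly when $\alpha_{|_n}\notin T$. The condition therefore collapses to $\forall\alpha\,\exists n\,\alpha_{|_n}\notin T$, that is, $T$ has no infinite branch, that is, $T\in WF$.

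Finally, since $WF$ is $\bpi^1_1$-complete, hence $\bpi^1_1$-hard, and $g$ Wadge-reduces $WF$ to $\mathcal M(X)$, the set $\mathcal M(X)$ is $\bpi^1_1$-hard; together with the upper bound it is complete coanalytic. I expect the only delicate point to be the central equivalence of the reduction, specifically the observation that downward closure of trees collapses the inner $\forall s$ quantifier, so that ``the cone above $\alpha_{|_n}$ avoids $T$'' is the same as ``$\alpha_{|_n}\notin T$''. This matches the quantifier shape of $\Int{g(T)}\ne\emptyset$ precisely with that of well-foundedness, and it is where the choice of $g(T)$ as the complement of $T$ (together with $\infty$) does all the work.
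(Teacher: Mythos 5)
Your proof is correct and takes essentially the same route as the paper: the upper bound is computed from Lemmas~\ref{lem:hypc} and \ref{lem:intnn} exactly as in the text, and your reduction $g(T)=(\Seq\setminus T)\cup\{\infty\}$ is just the complement of the paper's reduction $f(T)=T$, hence equivalent to it, since complementation is a homeomorphism of $\mathcal P(X)$ that preserves $\mathcal M(X)$ by Proposition~\ref{prop:beg}(\ref{prop:beg:3}). The only stylistic difference is that the paper verifies the reduction directly (well-founded trees are closed, ill-founded trees are dense and codense), whereas you route the verification through the two lemmas and the quantifier collapse given by downward closure; both verifications are sound.
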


\begin{proof}
To see that $ \mathcal M (X)$ is coanalytic, notice that \eqref{eq:lemmaint} is a coanalytic condition in the variable $A$; then use lemmas \ref{lem:hypc}(3) and \ref{lem:intnn}, together with the fact that the function $A\mapsto X\setminus A$ is a homeomorphism of $ \mathcal P (X)$.

For the hardness part, let $f:Tr\to \mathcal P (X)$ be defined by setting $f(T)=T$, so that $f$ is continuous.
Now notice that:
\begin{itemize}
\item If $T$ is well founded, then it is a closed subset of $X$, therefore it is monitorable.
\item If $T$ is ill founded, then $T$ is a dense subset of $X$ by the characterisation of non-empty open sets; moreover $T$ is a codense subset of $X$, since $\infty\notin T$.
Therefore it is not monitorable.
\end{itemize}
\end{proof}

Proposition \ref{24112025} shows that there can be a big difference for the complexity of the family of monitorable subsets in a countable topological space, according to whether it is second countable or not.
Notice that a second countable topology on a countable space is always $ \bpi^0_3$ (see \cite[proposition 3.2(ii)]{Todorc2014}), while non-second countable topologies may be way more complicated; it can therefore be expected that the family of monitorable sets can be very complicated as well.
%


\bibliographystyle{alphaurl}
\bibliography{biblio}

\end{document}